\newtheorem{thm}{Theorem}
\newtheorem{cnj}[thm]{Conjecture}
\newtheorem{cor}[thm]{Corollary}
\newtheorem{exa}[thm]{Example}
\newtheorem{fct}[thm]{Fact}
\newtheorem{lem}[thm]{Lemma}
\newtheorem{prb}[thm]{Problem}
\newtheorem{prp}[thm]{Proposition}
\newtheorem{qst}[thm]{Question}
\newtheorem{rem}[thm]{Remark}
\def\G{{\Gamma}}
\def\f{{\phi}}
\def\i{{\iota}}
\def\ps{{\psi}}
\def\Sg{{\Sigma}}
\def\V{{\Lambda}}
\def\cA{{\cal A}}
\def\cB{{\cal B}}
\def\cC{{\cal C}}
\def\cD{{\cal D}}
\def\cG{{\cal G}}
\def\cP{{\cal P}}
\def\cQ{{\cal Q}}
\def\sN{{\sf N}}
\def\sP{{\sf P}}
\def\cdg{{\textsf{\textbf{CD}}}}
\def\gdg{{\textsf{\textbf{SD}}}}
\def\pdg{{\textsf{\textbf{PD}}}}
\def\zD{{\mathbb D}}
\def\zZ{{\mathbb Z}}
\def\bG{{\mathbf{G}}}
\def\bH{{\mathbf{H}}}
\def\bx{{\mathbf{x}}}
\def\by{{\mathbf{y}}}
\def\mfC{{\mathfrak{C}}}
\def\dist{{\sf dist}}
\def\len{{\sf length}}
\def\po{{\sf P1}}
\def\pt{{\sf P2}}
\def\3sat{{\sf 3{\small SAT}}}
\def\odd{{\sf odd}}
\def\one{{\sf one}}
\def\four{{\sf four}}
\def\nx{{\sf Next}}
\def\pv{{\sf Prev}}
\def\dth{{d^{\rm th}}}
\def\mt{{\emptyset}}
\def\qed{{\hfill $\mathbin{\gbox}$\bigskip}}
\def\rar{{\rightarrow}}
\def\sqr#1#2{{\vcenter{\hrule height.#2pt
        \hbox{\vrule width.#2pt height#1pt \kern#1pt
                \vrule width.#2pt}
        \hrule height.#2pt}}}
\def\gbox{{\mathchoice\sqr45\sqr45\sqr{2.1}3\sqr{1.5}3}}
\definecolor{brwn}{RGB}{140, 70, 20}
\definecolor{gren}{RGB}{  0,140, 10}
\definecolor{primo}{RGB}{128,0,128}
\definecolor{segundo}{RGB}{255,92,0}
\definecolor{primox}{RGB}{128,0,128}
\definecolor{segundox}{RGB}{255,92,0}
\definecolor{fir}{RGB}{0,104,180}
\definecolor{sec}{RGB}{200, 0, 50}
\definecolor{thi}{RGB}{70, 100, 30}
\definecolor{fou}{RGB}{140, 70, 20}
\definecolor{fif}{RGB}{100, 0, 250}
\definecolor{darkblue}{RGB}{0,0,250}
\definecolor{mnt}{RGB}{167,230,215}
\newcommand{\fs}[1]{\textcolor{blue}{\sf{#1}}}
\newcommand{\gh}[1]{\textcolor{gren}{\sf{#1}}}
\newcommand{\me}[1]{\textcolor{brwn}{\sf{#1}}}
\newcommand{\tp}[1]{\textcolor{orange}{\sf{#1}}}
\newcommand{\up}[1]{\textcolor{red}{\sf{#1}}}
\begin{document}

\title{A New Dominating Set Game on Graphs}

\author{
Sean Fiscus 
    \footnotemark[1]
\and
Glenn Hurlbert 
    \footnotemark[2]
\and
Eric Myzelev
    \footnotemark[3]
\and
Travis Pence
    \footnotemark[4]
}

\date{}

\maketitle

\footnotetext[1]{
Department of Mathematics,
Duke University, 
Durham, North Carolina, USA,
\texttt{sean.fiscus@duke.edu}
}

\footnotetext[2]{
Department of Mathematics and Applied Mathematics, 
Virginia Commonwealth University, 
Richmond, Virginia, USA,
\texttt{ghurlbert@vcu.edu}
}

\footnotetext[3]{
Department of Mathematics, 
University of Pennsylvania,
Philadelphia, Pennsylvania, USA,
\texttt{myzelev@sas.upenn.edu}
}

\footnotetext[4]{
Department of Computer Science, 
University of Wisconsin,
Madison, Wisconsin, USA,
\texttt{tnpence@wisc.edu}
}

\begin{abstract}
We introduce a new two-player game on graphs, in which players alternate choosing vertices until the set of chosen vertices forms a dominating set.
The last player to choose a vertex is the winner.
The game fits into the scheme of several other known games on graphs.
We characterize the paths and cycles for which the first player has the winning strategy.
We also create tools for combining graphs in various ways (via graph powers, Cartesian products, graph joins, and other methods) for building a variety of graphs whose games are won by the second player, including cubes, multidimensional grids with an odd number of vertices, most multidimensional toroidal grids, various trees such as specialized caterpillars, the Petersen graph, and others. 
Finally, we extend the game to groups and show that the second player wins the game on abelian groups of even order with canonical generating set, among others.
\end{abstract}

{\bf Keywords:} dominating set, games on graphs, graph power, Cartesian product, graph join, Cayley graph\\
\medskip

{\bf MSC 2020:} 91A43, 05C57

\newpage

\section{Introduction}
\label{s:Intro}

In this work we introduce a new combinatorial game, which we call the {\it snooker domination} (\gdg) {\it game}.
On a given graph, two players take turns choosing vertices until the set of chosen vertices forms a dominating set. 
The winner is the player who was last to choose a vertex. 
This game is similar to one introduced by Bre{\v s}ar, Klav{\v z}ar, and Rall \cite{Bre_ar_2013}, called the {\it domination game}, which we refer to as the {\it classical} version.
The main difference is that, in the classical game, each move must dominate a vertex not currently dominated; that is, we allow moves that choose a dominated vertex whose neighbors are also dominated (we think of this as {\it snookering}).
This simple change leads to completely different game behavior by preventing the normal machinery of game sums.
See \cite{BrMaMaSa} for recent results on normal play in the classical domination game.

We remark that this game generalizes the following natural game on groups, for which we derive several results in this paper. 
For a group $\G$ with generating set $\Sg$, where $\Sg$ is closed under inverses, players alternate choosing elements of $\G$ until every unchosen element $h$ equals $h'g$ for some chosen $h'\in\G$ and some $g\in\Sg$.
In this instance, the game played on a group is equivalent to the game played on the Cayley graph of that group.

All graphs $G=(V,E)$ are assumed to be finite and connected. 
We denote the path on $n$ vertices by $P_n$ and the cycle on $n$ vertices by $C_n$, and refer to the degree 1 vertices of a path as endpoints, and the degree 2 vertices as interior vertices.
The open (closed) neighborhoods of a vertex $u$ and of a set of vertices $U$ are denoted $N(u)$ (resp. $N[u]=N(u)\cup\{u\}$) and $N(U)=\cup_{u\in U}N(u)$, respectively.
We also define the set notation $N_X(u)=N(u)\cap X$.
For $u, v\in V(G)$, the function $\dist_G(u,v)$ measures the distance between them; i.e., the number of edges in a shortest $uv$-path.
This allows us to define the {\it closed distance-$d$ neighborhood} of $u$: $N^d[u]=\{v\in V\mid\dist(u,v)\le d\}$.
A set of vertices $D\subseteq V$ is called a {\it dominating set} if $V=N[D]$.
More generally, $D$ is a {\it distance-$d$ dominating set} if $V=N^d[D]$.

\subsection{History}
\label{ss:Hist}

There is a large body of work in graph theory that studies, not the graphs themselves, but the activities or games involving the placement of objects on the vertices of a graph and movements of them along edges under certain rules for various purposes.
Some versions of these model the spread of information or disease, supply-demand optimization, network search, and other applications, while others are studied purely for game-theoretic research.
Autonoma such as Conway's Game of Life \cite{ConwayLife} and bootstrap percolation \cite{BollRior} can be thought of as 0-person games, while others such as chip firing \cite{BjoLovSho}, zero forcing \cite{AIM}, and graph burning \cite{Bonato} can be thought of as 1-person games, or puzzles.
The objects of some 2-person games like cops \& robbers \cite{CopsRobbers} and graph pebbling \cite{Chung,HurlKent} are moved around by players, while some like Go \cite{Berlekamp} and Hex \cite{Hayward} are not.
Our game is of this last category.

Several games like ours have been studied.
For example, one can play a game in which two players alternate choosing vertices so that the set of chosen vertices is independent.
Phillips and Slater \cite{PhilSlatEnclave} initiated the study of the case in which the last player to chose a vertex wins, while Phillips and Slater \cite{PhilSlat} and Goddard and Henning \cite{GoddHenn} have studied the case of one player trying to maximize and the other player trying to minimize the size of the final chosen set.
Huggan, Huntemann, and Stevens \cite{HugHunSte} studied the game in which players alternate choosing vertices of a hypergraph $H$ (in their case a block design) so that the chosen set does not contain any edge of $H$, with the winner being last player able to choose a vertex.

As mentioned, in the game we study here, the players alternate choosing vertices until the chosen vertices form a dominating set.
In this context, 
Bre{\v s}ar, Henning, Klav{\v z}ar, and Rall \cite{BrHeKlRa} and Bre{\v s}ar, Klav{\v z}ar, and Rall \cite{BreKlaRal} studied the ``maximizer-minimizer'' version of the game, while Henning, Klav{\v z}ar, and Rall \cite{HenKlaRal} did the same for total dominating sets.
(Another interesting maximize-minimize variation is studied by Alon, Balogh, Bollob\'as, and Szab\'o \cite{AlBaBoSz}, in which players alternate orienting the edges of a graph until the entire graph is oriented, observing the size of the resulting oriented domination number.)

Here we study the winner-loser version, the winner being the player who first creates a dominating set from the chosen vertices.

\subsection{Game Theory Basics}
\label{ss:GameTheory}

We follow the traditional language and notation of \cite{BerConGuy,Fraenkel}.
Because a \gdg\ game is a finite, perfect information game with no ties, we know that one of the players has a winning strategy.
At every stage of the game, the player who is next to play is denoted \nx\, with the other player denoted \pv\ (thought of as ``previous'').
At the beginning of the game, we say that Player 1 is next, and write $\po=\nx$, with Player 2 being $\pt=\pv$.
For clarity, notice that after the first move we have $\nx=\pt$ and $\pv=\po$;
That is, \po\ and \pt\ are the permanent names of the players, while \nx\ and \pv\ describe which of them is about to play.
Then the set \sN\ is defined to be the set of all positions of the game for which \nx\ has the winning strategy, while \sP\ is defined to be the set of all positions of the game for which \pv\ has the winning strategy.
By identifying a game with its initial position, we can say that the game is in \sN\ (or \sP) if its initial position is in \sN\ (or \sP).

Every position $\cQ$ corresponds to a set $B$ of chosen vertices, so we define the {\it size} of $\cQ$ to be $|B|$.
Because \nx\ must always choose a vertex that has not yet been chosen, we will abuse notation slightly by writing $u\in\cQ$ to mean that $u$ is a move available for \sN\ to play from position $\cQ$.
The position created from $\cQ$ by \nx\ choosing the vertex $u\in\cQ$ is denoted $\cQ(u)$, and corresponds to the chosen set $B\cup\{u\}$.
Thus, \sP\ and \sN\ can be calculated recursively by first placing the position corresponding to $V(G)$ in \sP.
Then we repeatedly consider all positions $\cQ$ of size one less than prior: if $\cQ$ corresponds to a dominating set then place it in \sP; otherwise, if $\cQ(u)\in\sP$ for some $u$ then place it in \sN; otherwise, place it in \sP.
A common technique in game theory for proving that a position is in \sP\ is to display a {\it pairing strategy}: a function $\f$ from the possible moves for \sN\ to those for \sP, along with a condition $C$ such that all winning positions satisfy $C$ and, for every position $\cQ$ that satisfies $C$ and every move $u$ from $\cQ$, $\cQ(u)$ does not satisfy condition $C$ and $\cQ(u,\f(u))$ does satisfy $C$.
We make use of this idea below.

Let \gdg\ to refer to domination games on general graphs; for games on paths and cycles, we refer to them by \pdg\ and \cdg, respectively.

\subsection{\gdg\ and \pdg\ Game Notations}
\label{ss:Games}

To assist with the analysis of game positions and player strategies, it will be useful to keep track of more than just the chosen vertices $B$; in particular, we will also distinguish vertices that are or are not dominated by $B$.
When the graph $G$ is understood (in our case $G=P_n$ or $C_n$), we will leave it out of the positional notation. 
With this finer perspective, a \gdg\ {\it position} $\cQ$ on $G$ is a partition $\{B,S,W\}$ of $V(G)$ (for {\it black}, {\it shaded}, and {\it white}) such that no white vertex is adjacent to a black vertex.
The interpretation here is that $B$ is the set of chosen vertices, $S=N(B)-B$ is the set of non-chosen vertices dominated by $B$, and $B$ is a dominating set of $G$ if and only if $W=\mt$.
\gdg\ positions with $W=\mt$ are called {\it trivial}; all other games are {\it nontrivial}.
Thus all trivial positions are in \sP.
We will use the notations $B_\cQ$, (resp. $S_\cQ$, $W_\cQ$) when necessary to indicate the black (resp. shaded, white) vertices of position $\cQ$.

Define two \gdg\ positions $\cQ_1=\{B_1,S_1,W_1\}$ and $\cQ_2=\{B_2,S_2,W_2\}$ on graphs $G_1$ and $G_2$ to be {\it isomorphic}, written $\cQ_1\cong\cQ_2$, if there is a graph isomophism $\varphi:G_1-B_1\to G_2-B_2$ such that $\varphi(S_1)=S_2$ and $\varphi(W_1)=W_2$. 
Clearly, if $\cQ_1\cong\cQ_2$ then $\cQ_1\in \sP$ if and only if $\cQ_2\in \sP$.

Now define the game $\zD=\zD(G)$ with initial position $\cQ_0$ having $W=V(G)$.
At any stage of the game with position $\cQ$,
if $W_\cQ\not=\mt$ then \nx\ chooses some vertex $u\in S_\cQ\cup W_\cQ$, resulting in the position $\cQ(u)$ having $B_{\cQ(u)}=B_\cQ\cup\{u\}$, $S_{\cQ(u)}=(S_\cQ-\{u\})\cup N_W(u)$, and $W_{\cQ(u)}=W_\cQ-(\{u\}\cup N(u))$.

\subsection{Involutions}
\label{ss:Involutions}

For a graph $G$, an {\it involution} of $G$ is an automorphism of $G$ of order two.
We say that an involution $\f$ is 
$d$-{\it involutionary} if, for all $v\in V(G)$, $\dist_G(v,\f(v))\ge d$. 
Define a graph to be {\it $d$-involutionary} if it has $d$-involution.
Observe that an involution $\f$ is 1-involutionary if and only if it has no fixed points, and is 3-involutionary if and only if $N[v]\cap N[\f(v)]=\mt$ for all $v\in V(G)$.
The definition can be extended to a position $\cQ=\{B,S,W\}$ on a graph $G$ by requiring that $\f$ be an automorphism of $G-B$; that is, once a vertex is chosen by a player, it is removed from consideration.
The importance of a 3-involution is that it gives rise to a pairing strategy, as we show in Lemma \ref{l:Involution}.

Suppose that $\G$ is a group with elements $\V$ and generating set $\Sg$ (which we consider as closed under inverses). 
We will always assume that $\Sg$ is closed under inverses. 
For the purposes of this paper, it is not important what set of relations $\G$ has; consequently, we will use the notation $\G=(\V,\Sg)$ to denote any such group.
The Cayley graph $\mfC(\G,\Sg)$ is defined to be the undirected graph $G=(V,E)$ with $V=\V$ and $E=\{\{g,h\}\in\binom{\V}{2}\mid gh^{-1}\in\Sg\}$.
We say that $(\G,\Sg)$ is {\it involutionary} if there is some $g\in\V-\Sg$
of order 2, and {\it 3-involutionary} if $g$ also cannot be expressed as the product of two generators.
Thus, if $(\G,\Sg)$ is $3$-involutionary then so is $\mfC(\G,\Sg)$.
Additionally, we define the game $\zD(\G,\Sg)$ to be the game described above, in which players alternate choosing elements of $\G$ until every unchosen element $h$ equals $h'g$ for some chosen $h'$ and some $g\in\Sg$.
The last person able to choose an element is the winner.
It is easy to see that $\zD(\G,\Sg)=\zD(G)$, where $G=\mfC(\G,\Sg)$.

\subsection{Graph constructions}
\label{ss:GraphCon}

Among our results are statements involving the following graph constructions.\label{ref2:overlap}

\bigskip
\noindent{\bf Graph power.}
Given a graph $G=(V,E)$ and positive integer $d$, the {\it $\dth$ power of $G$} is denoted $G^{(d)}=(V,F)$, where $F=\{\{x,y\}\in\binom{V}{2}\mid \dist_G(x,y)\le d\}$.
Observe that $N_G^d[u]=N_{G^{(d)}}[u]$.
For example, if $M$ is a perfect matching in $K_{2t}$, then $C_{2k}^{(k-1)}\cong K_{2k}-M$.

\bigskip
\noindent{\bf Cartesian product.}
Given graphs $G_1=(V_1,E_1)$ and $G_2=(V_2,E_2)$, the {\it Cartesian product of $G_1$ and $G_2$} is denoted $G_1\mathbin{\gbox} G_2=(V,E)$, where $V=V_1\times V_2$ and $E= 
\{\{(u,v_1),(u,v_2)\}\mid \{v_1,v_2\}\in E_2\}\cup
\{\{(u_1,v),(u_2,v)\}\mid \{u_1,u_2\}\in E_1\}$.
For example, the 2-dimensional $m\times k$ grid on $mk$ vertices is isomorphic to $P_m\mathbin{\gbox} P_k$.
Also, the $d$-dimensional cube equals $P_2^d = P_2\mathbin{\gbox} P_2\mathbin{\gbox}\cdots \mathbin{\gbox} P_2$ ($d$ copies of $P_2$).

\bigskip
\noindent{\bf Join.}
Given graphs $G=(V,E)$ and $H=(W,F)$, each with distinguished vertices $x\in V$ and $y\in F$, the {\it join of $(G,x)$ and $(H,y)$} is denoted $(G,x)\cdot (H,y) = (V',E')$, where $V'=V\cup W$, setting $x=y$, and $E'=E\cup F$.
For example, for any such $x$ and $y$, the join $(C_n,x)\cdot (C_m,y)$ can be drawn to resemble a figure eight or infinity symbol.

\bigskip
\noindent
{\bf Dangling.}
\label{ref1:dangling}
Given a graph $G=(V,E)$ with a sequence of distinct, distinguished vertices $\bx=(x_1,\ldots,x_k)$ and a sequence of graphs $\bH=(H_1,\ldots,H_k)$ (each $H_i=(W_i,F_i)$) with corresponding sequence of distinguished vertices $\by=(y_1,\ldots,y_k)$ (each $y_i\in W_i$), a {\it dangling of $\bH$ on $G$} is denoted $(G,\bx)\cdot (\bH,\by)=(V',E')$, where $V'=V\cup\left(\cup_{i=1}^k W_i\right)$, setting each $x_i=y_i$, and $E'=E\cup\left(\cup_{i=1}^k F_i\right)$.
One can think of dangling as a sequence of joins done in parallel.
For example, if $\bx$ is any ordering of $V$ and each $H_i\cong K_2$ with any choice of $y_i$, then $(C_n,\bx)\cdot (\bH,\by)$ 
is known as the {\it $k$-sunlet} graph $S_k$, shown in Figure \ref{fig:sunlet}, below.

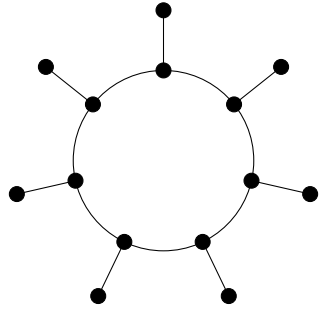
\begin{figure}[h]
\centering
\begin{tikzpicture}[scale=0.4]
\tikzstyle{every node}=[draw,fill=black,shape=circle,inner sep=2pt];
\def \inner {3cm}
\def \outer {5cm}
\def \ang {360/7}
\draw (0,0) circle (\inner);
\foreach \i in {0,...,6}{
    \node at ({90+(\i*\ang)}:\inner) {};
    \node at ({90+(\i*\ang)}:\outer) {};
    \draw ({90+(\i*\ang)}:\inner) -- ({90+(\i*\ang)}:\outer);
    }
\end{tikzpicture}
\caption{The 7-sunlet graph $S_7$.}
\label{fig:sunlet}
\end{figure}

\bigskip
\noindent
{\bf Bridging.}
\label{ref1:bridging}
Given a graph $G=(V,E)$ with a distinguished vertex $x$ and a sequence of graphs $\bH=(H_1,\ldots,H_k)$ (each $H_i=(W_i,F_i)$) with corresponding sequence of distinguished vertices $\by=(y_1,\ldots,y_k)$ (each $y_i\in W_i$), a {\it bridging of $(G,x)$ and $(\bH,\by)$} is denoted $(G,x)$--$ (\bH,\by)=(V',E')$, where $V'=V\cup\left(\cup_{i=1}^k W_i\right)$ and $E'=E\cup\left(\cup_{i=1}^k (F_i\cup\{\{x,y_i\}\})\right)$.
(The emdash notation is meant to suggest connecting two graphs by an edge, in contrast to the $\cdot$ notation of dangling, which connects two graphs on a common vertex.)
If each $(H_i,y_i)=(H,y)$ we denote this operation by $(G,x)$--$(H,y)^k$, with the exponent suppressed when $k=1$.
For example, for degree 2 vertices $x$ and $y$, the bridging $(P_3,x)$--$ (P_3,y)$ yields the internally 3-regular caterpillar shown in Figure \ref{fig:cater}.

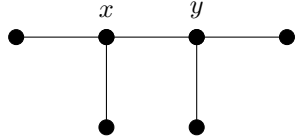
\begin{figure}[h]
\centering
\begin{tikzpicture}[scale=0.4]
\tikzstyle{every node}=[draw,fill=black,shape=circle,inner sep=2pt];
\def \len {3cm}
\foreach \i in {0,...,3}
    \node at ({\i*\len},{1*\len}) {};
\draw (0,{1*\len}) -- (3*\len,{1*\len});
\foreach \i in {1,2}{
    \node at ({\i*\len},0) {};
    \draw ({\i*\len},0) -- ({\i*\len},{1*\len});
    }
\node[label=$x$] at ({1*\len},{1*\len}) {};
\node[label=$y$] at ({2*\len},{1*\len}) {};
\end{tikzpicture}
\caption{The internally 3-regular caterpillar $(P_3,x)$--$ (P_3,y)$.}
\label{fig:cater}
\end{figure}

\subsection{Our results}
\label{ss:Results}

The following lemma is fundamental to carving out many results.

\begin{lem}[Involution Lemma]
\label{l:Involution}
If $\cQ$ is a 3-involutionary position, then $\cQ\in \sP$. 
In particular, if the graph $G$ is 3-involutionary then $\zD(G)\in\sP$.
\end{lem}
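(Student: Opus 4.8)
The plan is to exhibit an explicit pairing strategy for \pv\ and to prove it wins by induction on the number $|V(G)|-|B_\cQ|$ of unchosen vertices. Fix a $3$-involution of $\cQ$, that is, an automorphism $\f$ of $G-B_\cQ$ that fixes $S_\cQ$ and $W_\cQ$ setwise, has order two, and satisfies $\dist_{G-B_\cQ}(v,\f(v))\ge 3$ for every vertex $v$. If $\cQ$ is trivial ($W_\cQ=\mt$) then $\cQ\in\sP$ and there is nothing to prove, so assume $W_\cQ\ne\mt$. Since $\dist_{G-B_\cQ}(v,\f(v))\ge 3>0$ forces $\f$ to be fixed-point-free and $W_\cQ$ is $\f$-invariant, $W_\cQ$ is a disjoint union of $2$-element orbits $\{w,\f(w)\}$; in particular $|W_\cQ|\ge 2$. \pv's strategy is simply: whenever \nx\ plays a vertex $u$, respond with $\f(u)$.

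The first thing I would check is that this response is always available and never lets \nx\ finish the game outright. Availability is immediate: $\f(u)\ne u$ because $\dist_{G-B_\cQ}(u,\f(u))\ge 3$, and $\f(u)\notin B_\cQ$ because $\f$ maps $V(G)-B_\cQ$ to itself, so $\f(u)$ is a legal move from $\cQ(u)$. That \nx\ cannot win outright amounts to showing $\cQ(u)$ is nontrivial for \emph{every} move $u$, and this is the one step that uses the full radius-$3$ hypothesis: pick any $w\in W_\cQ$, so that $w$ and $\f(w)$ are two distinct white vertices with $N[w]\cap N[\f(w)]=\mt$ (the reformulation of $\dist_{G-B_\cQ}(w,\f(w))\ge 3$ recorded in Section~\ref{ss:Involutions}; closed neighborhoods of white vertices avoid $B_\cQ$, so it is harmless to compute them in $G$ or in $G-B_\cQ$). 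No single vertex $u$ can lie in both $N[w]$ and $N[\f(w)]$, hence at least one of $w,\f(w)$ remains undominated in $\cQ(u)$, i.e.\ $W_{\cQ(u)}\ne\mt$. (A merely fixed-point-free, or even $2$-involutionary, map would fail here, since \nx\ could then knock out an entire orbit with one vertex.)

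Next I would verify that the strategy perpetuates itself, namely that $\cQ(u,\f(u))$ is again a $3$-involutionary position witnessed by the restriction of the same $\f$. Put $B'=B_\cQ\cup\{u,\f(u)\}$. Since $\{u,\f(u)\}$ is a union of $\f$-orbits, $\f$ restricts to an automorphism of the induced subgraph $G-B'$; on the remaining vertex set it is still fixed-point-free, hence of order two whenever that set is nonempty; it still fixes the updated shaded and white sets setwise, which is a short set-theoretic computation from $\f(S_\cQ)=S_\cQ$, $\f(W_\cQ)=W_\cQ$, $\f(N_{G-B_\cQ}[x])=N_{G-B_\cQ}[\f(x)]$, and the update rules for $S_{\cQ(u,\f(u))},W_{\cQ(u,\f(u))}$; and $\dist_{G-B'}(v,\f(v))\ge\dist_{G-B_\cQ}(v,\f(v))\ge 3$ because deleting vertices never decreases distance. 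As $|V(G)|-|B'|=|V(G)|-|B_\cQ|-2$, the induction hypothesis yields $\cQ(u,\f(u))\in\sP$; since this holds for every move $u$ available to \nx, \pv\ wins from $\cQ$, i.e.\ $\cQ\in\sP$. (The base case is exactly the trivial positions, already known to lie in \sP.) The ``in particular'' then follows by applying the result to the initial position $\cQ_0$, for which $W=V(G)$ and $B=S=\mt$, so that the given $3$-involution of $G$ is itself the required $3$-involution of $\cQ_0$.

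I expect the second paragraph to be the main obstacle: it is the only place the argument genuinely needs distance $3$ rather than mere fixed-point-freeness, and it quietly relies on $\f$ preserving the white set so that undominated vertices come in pairs that a single move cannot eliminate simultaneously. By contrast, the self-perpetuation check of the third paragraph is routine bookkeeping, and the recursion is unproblematic because a nontrivial $3$-involutionary position always has at least two unchosen (indeed two white) vertices, so the measure $|V(G)|-|B_\cQ|$ can always be decreased by the paired pair of moves.
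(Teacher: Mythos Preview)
Your proof is correct and follows essentially the same route as the paper's: exhibit the pairing strategy $u\mapsto\f(u)$, check that \nx\ can never finish the game in one move, and check that $\f$ restricts to a $3$-involution of $\cQ(u,\f(u))$ so the argument recurses. The only noticeable difference is in the nontriviality step: the paper does a small case split on whether $u\in W_\cQ$ or $u\in S_\cQ$, whereas you argue more directly by fixing an arbitrary $w\in W_\cQ$ and observing that $N[w]\cap N[\f(w)]=\mt$ prevents a single move from dominating the orbit $\{w,\f(w)\}$; your version is slightly cleaner, but the content is the same.
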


\begin{proof}
We show that $\cQ\not\in\sN$ by proving that if $\cQ$ is a 3-involutionary position then, for all $u\in\cQ$ we have (1) $\cQ(u)$ is not trivial and (2) $\cQ(u,v)$ is 3-involutionary for some $v\in\cQ(u)$.
Thus \po\ can never end the game.

Indeed, let $\f$ be 3-involution of $\cQ$, suppose that \nx\ chooses $u\in\cQ$, and define $v=\f(u)$.
Because $\f$ is an involution, it partitions the vertices into pairs $\{z,\f(z)\}$ such that the elements of each pair are of the same ``color'' (white or shaded).
Thus, if $u\in W_\cQ$ then $v\in W_\cQ$.
Because $\f$ is 3-involutionary, $v\in W_{\cQ(u)}$.
Also, if $u\in S_\cQ$ then $v\in S_\cQ$.
So, if $\cQ(u)$ is trivial, then there is some $x\in N_{W_\cQ}(u)$, which implies that $\f(x)\in N_{W_\cQ}(v)$.
But because $\f$ is 3-involutionary, $\f(x)\not\in N[u]$, and so $f(x)\in W_{\cQ(u)}$, a contradiction.
This proves part (1).
For part (2), we observe that $\f$ is a 3-involution of $\cQ(u,v)$.
\end{proof}

The Involution Lemma yields several immediate results.

\begin{cor}
\label{c:Cubes}
For all $d\ge 2$ we have $\zD(P_2^d)\in\sP$, and $\zD(P_2)\in\sN$.
\end{cor}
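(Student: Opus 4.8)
The plan is to split on the dimension: for $d\ge 3$ the corollary follows immediately from the Involution Lemma (Lemma~\ref{l:Involution}), while the two small cases $d=2$ and $d=1$ are settled by direct inspection, since those games last at most two moves.

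First I would fix coordinates, identifying $V(P_2^d)$ with $\{0,1\}^d$ so that two vertices are adjacent exactly when they differ in a single coordinate; then $\dist_{P_2^d}$ is Hamming distance. The obvious candidate involution is the bit-complement map $\f(v)=\mathbf{1}-v$. It is an automorphism of $P_2^d$ with $\f\circ\f=\mathrm{id}$, and for every $v$ one has $\dist(v,\f(v))=d$. Hence for $d\ge 3$ the map $\f$ is a $3$-involution, so $P_2^d$ is $3$-involutionary and Lemma~\ref{l:Involution} yields $\zD(P_2^d)\in\sP$.

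It remains to treat $d\in\{1,2\}$ by hand. For $d=2$ we have $P_2^2\cong C_4$: no single vertex dominates $C_4$ (each closed neighborhood contains only $3$ of the $4$ vertices), so \po\ cannot finish on the opening move, whereas every pair of vertices of $C_4$ is a dominating set, so after \pt's reply the game is over. Thus $\zD(C_4)\in\sP$, covering $d=2$. For $d=1$, the single vertex chosen by \po\ already dominates $P_2$, so \po\ wins immediately and $\zD(P_2)\in\sN$.

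The one thing to be careful about is the boundary dimension: the complement involution has displacement exactly $d$, so it qualifies as a $3$-involution precisely when $d\ge 3$; moreover $C_4$ has diameter $2$ and admits no $3$-involution at all, so the Involution Lemma genuinely does not apply when $d=2$ and that case truly must be checked separately rather than folded into the general argument. Since the $C_4$ game terminates after the second move regardless of play, this check is routine, and I do not expect any further obstacle.
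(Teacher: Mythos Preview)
Your proof is correct and follows essentially the same approach as the paper: both use the bit-complement involution $\f(v)=\mathbf{1}-v$ and invoke Lemma~\ref{l:Involution} for $d\ge 3$, while handling $d\in\{1,2\}$ directly. The only cosmetic difference is that for $d=2$ the paper has \pt\ respond specifically with the antipodal vertex $\f(u)$, whereas you observe (equally correctly) that \emph{any} reply by \pt\ completes a dominating set on $C_4$.
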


\begin{proof}
Label the two vertices of $P_2$ by 0 and 1; then this labeling naturally extends to the vertices of $G=P_2^d$ by labels in $\{0,1\}^d$.
Define the {\it antipodal} mapping $\f$ on $G$ by $\f(x_1,\ldots,x_d)=(1-x_1,\ldots,1-x_d)$.
For $d=1$, the statement is obvious.
For $d=2$, if \po\ plays $u$ then \pt\ wins by playing $\f(u)$.
For $d\ge 3$, $\f$ is 3-involutionary, so $\zD(G)\in\sP$ by Lemma \ref{l:Involution}.
\end{proof}

\begin{cor}
\label{c:InvoGroup}
For every 3-involutionary group $\G$ with generating set $\Sg$, we have $\zD(\G,\Sg)\in\sP$.
\end{cor}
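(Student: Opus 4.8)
The plan is to reduce the statement to the Involution Lemma (Lemma~\ref{l:Involution}) using the two facts recorded in Section~\ref{ss:Involutions}: that $\zD(\G,\Sg)=\zD(G)$ for $G=\mfC(\G,\Sg)$, and that $\mfC(\G,\Sg)$ is $3$-involutionary whenever $(\G,\Sg)$ is. Granting these, the corollary is immediate: a $3$-involutionary graph $G$ satisfies $\zD(G)\in\sP$ by Lemma~\ref{l:Involution}, hence $\zD(\G,\Sg)=\zD(G)\in\sP$. So the only real content is to exhibit a $3$-involution of the Cayley graph starting from the group hypothesis.

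To do that I would fix $g\in\V-\Sg$ of order $2$ that is not a product of two generators, and set $\f(h)=hg$ on $V(G)$. Three checks are needed. First, $\f$ is a graph automorphism of $G$: right multiplication preserves adjacency because $(hg)(h'g)^{-1}=h(h')^{-1}$, so $\{h,h'\}$ is an edge exactly when $\{hg,h'g\}$ is. Second, $\f$ is an involution with no fixed point, since $\f^2(h)=hg^2=h$ and $g\ne e$. Third, and this is the only nonroutine point, one must verify the displacement bound $\dist_G(h,\f(h))\ge 3$ for every $h$; translating a shortest $h$-to-$hg$ walk into a minimal-length product of generators shows that $\dist_G(h,hg)$ equals the $\Sg$-word length of $hgh^{-1}$, and the hypotheses on $g$ are designed precisely to exclude word length $0$, $1$, and $2$. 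With such a $\f$ in hand, Lemma~\ref{l:Involution} applies and the proof is complete.

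I expect the main (and essentially only) obstacle to be this last displacement computation. At the base point $h=e$ it is exactly the defining condition of a $3$-involutionary group, but to obtain it uniformly in $h$ one needs the word-length bound on $g$ to persist under conjugation; this is automatic when $\G$ is abelian (so that $hgh^{-1}=g$), which is the setting of the paper's principal group applications, and should be read into the hypothesis for general $\G$. Apart from that point, the corollary is purely the translation between the group game $\zD(\G,\Sg)$ and the Cayley-graph game $\zD(\mfC(\G,\Sg))$ followed by a single application of the Involution Lemma, so no further difficulty is anticipated.
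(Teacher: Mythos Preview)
Your approach is the paper's: pass to the Cayley graph, define $\f$ as multiplication by the distinguished order-$2$ element $g$, verify it is a $3$-involution, and invoke Lemma~\ref{l:Involution}. The one cosmetic difference is that you multiply on the right ($\f(h)=hg$) while the paper multiplies on the left ($\f(v)=gv$); your observation that the displacement bound for general (nonabelian) $\G$ tacitly needs the word-length condition on $g$ to be conjugation-invariant is well taken and in fact applies equally to the paper's version, where the same hidden assumption surfaces instead in the claim that left multiplication is a graph automorphism under the edge convention $uv^{-1}\in\Sg$.
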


\begin{proof}
Let $(\G,\Sg)$ be a 3-involutionary group, and $g$ be an order 2 element of $\V-\Sg$ that cannot be expressed as the product of two generators. 
Define $G=\mfC(\G,\Sg)=(V,E)$, and let $\f:G\to G$ be an automorphism given by $\f(v)=gv$.
Then $\f$ is also an involution on $G$. 
To see that $\f$ is 3-involutionary, first notice that $\f(u)=gu\notin N[u]$ since $g\notin \Sigma$. 
Then supposing, for contradiction, that there is an element $v\in N(u)\cap N[\f(u)]$, we see that there are generators $a,b\in \Sg$ such that $abu=\f(u)=gu$.
Canceling $u$ yields $ab=g$, contradicting the fact that $g$ cannot be written as a product of at most two generators. 
This shows that $\varphi$ is 3-involutionary.
\end{proof}

\begin{exa}
\label{e:CyclePower}
Let $n\ge 6$ be even and let $k<n/4$ be a positive integer. 
Consider the game in which two players alternate choosing numbers from $\{0,1,2,\ldots,n-1\}$ until every unchosen number is at distance at most $k$ from some chosen number, where the distance between two numbers $a\le b$ equals $\min\{b-a,n+a-b\}$, and the winner is the last player to move.
This game can be modeled by the game on the group $\G=\zZ_n$, with generating set $\Sg=\{\pm 1, ...,\pm k\}$, where $\zZ_n$ denotes the cyclic group of order $n$. 
Then $n/2\notin \Sg$ and cannot be expressed as the sum of two elements in the generating set, which implies that the automorphism $\f(x)=x+n/2\mod n$ is 3-involutionary, and so, by Corollary \ref{c:InvoGroup}, $\zD(\G,\Sg)\in\sP$.
We can also observe that $\mfC(\G, \Sg)\cong C_n^{(k)}$, and thus this also shows that $C_n^{(k)}\in \sP$.
\end{exa}

\begin{exa}
\label{e:CycleProduct}
Let $n_1,...,n_d$ be positive integers, each at least 3,
and suppose that $n_1$ is even and greater than or equal to $6$.
Consider the group $\G=
\zZ_{n_1}\times\cdots\times \zZ_{n_d}$.
Let $e_j\in \G$ be the element with 1 in coordinate j and 0 in every other coordinate, and let the {\em canonical} generating set of $\G$ be $\Sg=\{\pm e_j\:|\: 1\le j\le d\}$.
Then, since $(n_1/2,0,...,0)\notin \Sg$ and cannot be expressed as $e_j+e_\ell$ for any $1\le j,\ell\le d$, by Corollary \ref{c:InvoGroup} we conclude that $\zD(\G, \Sg)\in \sP$.
Furthermore, we can observe that $\mfC(\G, \Sg)\cong C_{n_1}\mathbin{\gbox}\cdots\mathbin{\gbox} C_{n_d}$, which shows that $C_{n_1}\mathbin{\gbox}\cdots\mathbin{\gbox} C_{n_d}\in\sP$.
\end{exa}

For an abelian group $\G$, we say that the representation $\G\cong\zZ_{n_1}\times\cdots\times\zZ_{n_d}$ is {\it good} if $n_1\ge 6$ or if $\G\cong \zZ_2^k\times\zZ_4^m$ for some $k$ and $m$.
Notice that every even-order abelian group has a good representation.
Then Example \ref{e:CycleProduct} proves the following proposition unless $\G\cong \zZ_2^k\times\zZ_4^m$, for which case Corollary \ref{c:Cubes} applies because $\mfC(\zZ_2^k\times\zZ_4^m, \Sigma)=P_2^{k+2m}$, where $\Sigma$ is the canonical generating set.

\begin{prp}
\label{p:Abelian}
Let $\G\not\cong\zZ_2$ be an even-order abelian group with good representation and corresponding canonical generating set $\Sg$.
Then $\zD(\G,\Sg)\in\sP$.
\hfill$\Box$
\end{prp}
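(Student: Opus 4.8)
The plan is to derive the proposition by combining the constructions already in hand, using the case split built into the definition of a \emph{good representation}. Since every even-order abelian group has a good representation, it suffices to prove $\zD(\G,\Sg)\in\sP$ in each of the two cases: (i) $\G\cong\zZ_{n_1}\times\cdots\times\zZ_{n_d}$ with $n_1$ even and $n_1\ge 6$, and (ii) $\G\cong\zZ_2^k\times\zZ_4^m$; in both cases $\Sg$ is the canonical generating set $\{\pm e_j\}$.

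For case (i) I would rerun the computation of Example~\ref{e:CycleProduct}: the element $g=(n_1/2,0,\ldots,0)$ has order $2$, and one checks that $g\notin\Sg$ and that $g$ cannot be written as $e_j+e_\ell$ for any $j,\ell$. The only way such a collision could arise is if $n_1/2$ is congruent to $1$, $2$, $-1$, or $-2$ modulo $n_1$, which for even $n_1$ forces $n_1\in\{2,4\}$ and is ruled out by $n_1\ge 6$; note that $\zZ_2$ factors occurring among $n_2,\ldots,n_d$ play no role in this check. Hence $(\G,\Sg)$ is $3$-involutionary and Corollary~\ref{c:InvoGroup} gives $\zD(\G,\Sg)\in\sP$. (Example~\ref{e:CycleProduct} as stated carries the extra hypothesis $n_i\ge 3$ for all $i$; since the argument uses only $n_1\ge 6$, I would either relax that hypothesis or simply repeat the one-line verification above.)

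For case (ii) the plan is to identify the Cayley graph and invoke Corollary~\ref{c:Cubes}. The Cayley graph of a direct product taken with the canonical (coordinatewise) generating set is the Cartesian product of the Cayley graphs of the factors; since $\mfC(\zZ_2,\{1\})\cong P_2$ and $\mfC(\zZ_4,\{\pm1\})\cong C_4\cong P_2\mathbin{\gbox} P_2$, we obtain $\mfC(\G,\Sg)\cong P_2^{k}\mathbin{\gbox}(P_2\mathbin{\gbox} P_2)^{m}\cong P_2^{k+2m}$. Set $d=k+2m$, so $|\G|=2^d$; because $\G\not\cong\zZ_2$ we have $d\ge 2$ (indeed $d=0$ forces $\G$ trivial and $d=1$ forces $\G\cong\zZ_2$). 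Then Corollary~\ref{c:Cubes} gives $\zD(P_2^{d})\in\sP$, and since $\zD(\G,\Sg)=\zD(\mfC(\G,\Sg))$ this completes the case, and with it the proposition.

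I expect no real obstacle beyond routine bookkeeping. The only genuine ingredients are (a) the structural fact that every even-order abelian group admits a good representation — from the primary decomposition, if some cyclic $2$-factor has order $\ge 8$, or if a nontrivial odd part can be absorbed into a $\zZ_2$ or $\zZ_4$ factor to form a cyclic factor of order $\ge 6$, then one has a representation of type~(i); the only remaining case is $\G\cong\zZ_2^k\times\zZ_4^m$ itself — and (b) the identification $\mfC(\zZ_2^k\times\zZ_4^m,\Sg)\cong P_2^{k+2m}$. Both are already asserted in the paragraph preceding the statement, so the proof amounts to assembling Example~\ref{e:CycleProduct} (or Corollary~\ref{c:InvoGroup}) together with Corollary~\ref{c:Cubes}.
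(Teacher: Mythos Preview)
Your proposal is correct and follows essentially the same route as the paper: split on the two cases in the definition of a good representation, apply Example~\ref{e:CycleProduct} (via Corollary~\ref{c:InvoGroup}) when $n_1\ge 6$ is even, and apply Corollary~\ref{c:Cubes} to $\mfC(\zZ_2^k\times\zZ_4^m,\Sg)\cong P_2^{k+2m}$ otherwise. Your extra bookkeeping---noting that Example~\ref{e:CycleProduct}'s hypothesis $n_i\ge 3$ for all $i$ is not actually needed, and checking $d=k+2m\ge 2$ from $\G\not\cong\zZ_2$---is a welcome tightening of points the paper leaves implicit.
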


We will defer the proof of the following Theorem to Section \ref{ss:InvoP}.

\begin{thm}
\label{t:InvoP}
The game $\zD(G)\in\sP$ for the following graphs $G$:
\begin{enumerate}
    \item 
    \label{cp:Cpower}
    $C_n^{(k)}$, for even $n\ge 6$ and $1\le k<n/4$.
    \item 
    \label{cp:CartProd}
    $G_0\mathbin{\gbox} H$, whenever (a) $G_0$ is 3-involutionary and $H$ is any graph, or (b) $G_0$ is 2-involutionary and $H$ is 1-involutionary.
    \item 
    \label{cp:Dangle2}
    $(G_0,(x,\f(x))\cdot ((H,H),(y,y))$, such that $\f$ is a 3-involution
    of $G_0$, $H=(W,F)$ is any graph, and $y\in W$.
    \item 
    \label{cp:DangleEven}
    $(G_0,\bx)\cdot (\bH,\by)$, $|V(G_0)|\ge 2$, $\bx=V(G_0)$, and each $H_i\cong K_{1,m_i}$ with odd $m_i\ge 1$ and $y_i$ is a dominating vertex of $H_i$.
    \item 
    \label{cp:Bridge}
    $(G_0,x)$--$(G_0,x)^k$, such that $x$ is a cut-vertex of $G_0$, some component of $G_0-x$ is a singleton, and $k$ is odd.
\end{enumerate}
\end{thm}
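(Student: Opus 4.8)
The plan is to derive parts~\ref{cp:Cpower}--\ref{cp:Dangle2} from $3$-involutions via the Involution Lemma (Lemma~\ref{l:Involution}), and parts~\ref{cp:DangleEven} and~\ref{cp:Bridge} from explicit pairing strategies, since those graphs need not be $3$-involutionary. Part~\ref{cp:Cpower} is Example~\ref{e:CyclePower}: $x\mapsto x+n/2\pmod n$ is an order-two automorphism of $C_n^{(k)}$ and, as $k<n/4$, it displaces every vertex by $\lceil n/(2k)\rceil\ge 3$. For~\ref{cp:CartProd}(a) use $(u,v)\mapsto(\varphi(u),v)$ with $\varphi$ a $3$-involution of $G_0$; additivity of distance in Cartesian products gives displacement $\dist_{G_0}(u,\varphi(u))\ge 3$. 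For~\ref{cp:CartProd}(b) use $(u,v)\mapsto(\varphi(u),\rho(v))$ with $\varphi$ a $2$-involution of $G_0$ and $\rho$ a fixed-point-free involution of $H$, giving displacement $\ge 2+1=3$. For~\ref{cp:Dangle2}, on the dangling $G'$ let $\psi$ equal $\varphi$ on $V(G_0)$ and interchange the two attached copies of $H$ by the identity of $H$; dangled copies attach at a single vertex and so create no shortcuts, whence $\psi$ displaces $v\in V(G_0)$ by $\dist_{G_0}(v,\varphi(v))\ge 3$ and a vertex $w$ of a copy of $H$ by $2\dist_H(w,y)+\dist_{G_0}(x,\varphi(x))\ge 3$, since every $w$--$\psi(w)$ path must pass through $x$, $G_0$, and $\varphi(x)$.

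Part~\ref{cp:DangleEven} hinges on the observation that, with each $H_i\cong K_{1,m_i}$ attached at its center $x_i$, a set $B$ dominates $G'$ if and only if for every $i$ either $x_i\in B$ or all $m_i$ leaves at $x_i$ lie in $B$ (domination of $x_i$ itself is automatic, since $m_i\ge 1$). Hence $\zD(G')$ does not see the edges of $G_0$ and is the token game with $|V(G_0)|$ ``gadgets'' (a center and $m_i$ leaves each), a gadget becoming \emph{closed} once its center or all of its leaves are taken, and play ending when every gadget is closed. As $|V(G')|=|V(G_0)|+\sum m_i$ is even (each $m_i$ odd), \pt\ plays to keep ``after my move the number of open gadgets is even'': when that parity is odd, \pt\ closes an open gadget by taking its center; when it is even, \pt\ takes a token that closes nothing. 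The sole nontrivial point is that such a neutral token exists whenever it is \pt's turn --- if not, every closed gadget would be exhausted and every open gadget would have exactly one free leaf, making the number of taken tokens $|V(G')|-2u$ with $u$ the number of open gadgets, which is even, contradicting that an odd number of tokens has been taken before any \pt-move. With $|V(G_0)|\ge 2$ ensuring \pt\ gets to move, this invariant keeps \po\ from ever facing a single open gadget, so \po\ never closes the last one and \pt\ wins.

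For part~\ref{cp:Bridge}, regard $G'=(G_0,x)$--$(G_0,x)^k$ as blobs $0$ (the base) and $1,\dots,k$, each a copy of $G_0$, with blob $i$ joined to blob $0$ by the edge $xx_i$; let $z$ be the vertex of the singleton component of $G_0-x$, so $z$ is a leaf at $x$. Since $k$ is odd, $k-1$ is even: pair up blobs $1,\dots,k-1$, let $\sigma$ be the resulting involution of $G'$, and let $\tau$ interchange blobs $0$ and $k$ by the identity of $G_0$ --- an automorphism of the subgraph induced on blobs $0$ and $k$, though not of $G'$. \pt\ replies to a move in a paired blob by its $\sigma$-image and to a move in blob $0$ or $k$ by its $\tau$-image; these replies are always legal and keep the chosen set \emph{balanced} ($\sigma$-symmetric across the paired blobs and $\tau$-symmetric across blobs $0$ and $k$). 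The crux is to show that no move of \po\ from a balanced, non-dominating position completes a dominating set: if \po\ plays any non-cut-vertex, then it and its mirror have disjoint closed neighborhoods, so the undominated set is empty --- impossible; and if \po\ plays one of the cut-vertices $x$ or $x_i$, then symmetry pins the undominated set into $\{x\}$ or $\{x_i\}$, which is then itself undominated, whereupon the leaf $z$ of the relevant blob is undominated and lies outside that singleton --- again impossible. (The case where \po\ plays $x$ splits further according to whether $x$ already has a chosen neighbor in blob $0$, in which case $\tau$-symmetry forces $B$ to have been dominating already.) Thus the game can only end on a move of \pt.

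The cases~\ref{cp:Cpower}--\ref{cp:Dangle2} are routine once the right involution is spotted. The real work is in part~\ref{cp:DangleEven} --- recognizing $\zD(G')$ as an edge-independent token game and carrying the parity count through --- and, above all, in part~\ref{cp:Bridge}, where the delicate step is the case analysis verifying that a balanced position is never one move from a dominating set; that is precisely where the singleton-component hypothesis is indispensable, since the pendant leaf $z$ is exactly what produces an undominated vertex whenever a cut-vertex would otherwise be a winning move for \po.
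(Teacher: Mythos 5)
Your proposal is correct. For parts 1--3 it coincides with the paper's proof: the same $3$-involutions (the antipodal shift on $C_n^{(k)}$, the product involutions, and the copy-swapping extension of $\f$), justified by the same distance computations. Part 5 is also essentially the paper's argument: both pair the base copy with one attached copy and the remaining $k-1$ attached copies among themselves, and both use the pendant singleton to rule out a winning move at a cut-vertex. One step to tighten in your part 5: in the non-cut-vertex case you deduce ``the undominated set is empty'' from the disjointness of $N[u]$ and $N[\f(u)]$, which tacitly assumes the undominated set is $\f$-symmetric. That symmetry can fail at the hub pair $\{x,x_k\}$, because $\f$ is not an automorphism there: $x$ may be dominated by some chosen $x_i$ with $1\le i\le k-1$ while $x_k$ is not. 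The escape is again the leaf: if $x_k$ is undominated then so is $z_k$, forcing $u\in\{z_k,x_k\}$, and then the mirror leaf $z$ in the base is undominated and lies outside $N[u]$. (The paper's write-up of this case is no more explicit, so this is a shared wrinkle rather than a defect peculiar to your approach.)

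Part 4 is where you genuinely diverge. The paper proves a strengthened statement by induction on a class $\cG$ of positions --- dangled stars augmented by evenly many isolated white and evenly many isolated shaded vertices --- and exhibits, for each move of \po, a reply of \pt\ that either wins or returns the position to $\cG$. You instead observe that, because every vertex of $G_0$ carries a star centered at it, a chosen set dominates if and only if every gadget (a center together with its $m_i$ leaves) is closed, so the edges of $G_0$ are irrelevant and the game collapses to a token game; the invariant ``after \pt's move the number of open gadgets is even,'' plus the counting argument that a neutral token always exists on \pt's turn, finishes it. Your reduction is shorter, makes the role of each hypothesis ($m_i$ odd, $\bx=V(G_0)$, $|V(G_0)|\ge 2$) transparent, and replaces the paper's isolated-vertex bookkeeping with a clean parity count; the paper's induction is less illuminating but is stated for general positions rather than only the initial one, which can be reused when these graphs arise as residues of larger games.
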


Paths and odd cycles are examples of graphs that are not
3-involutionary,
so they require additional techniques.
We are able to characterize which path and cycle games are won by Player 1, as follows.

\begin{thm}
\label{t:DomGameCycle}
For $n\ge 3$, $\zD(C_n)\in\sN$ if and only if $n$ is odd but not equal to 5.
\end{thm}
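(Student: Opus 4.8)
The plan is to prove the two directions separately, and in each direction to use parity arguments tracking the number of moves in the game combined with explicit strategies. First note that for even $n$ the cycle $C_n$ is $3$-involutionary for $n\ge 6$ (the antipodal map $x\mapsto x+n/2$ works, since $n/2\notin\{\pm 1\}$ and is not a sum of two generators), so $\zD(C_n)\in\sP$ by the Involution Lemma; and $C_4=P_2^2\in\sP$ by Corollary~\ref{c:Cubes}. This disposes of all even $n$. It remains to handle odd $n$: show $\zD(C_5)\in\sP$ and $\zD(C_n)\in\sN$ for odd $n\ge 7$.

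For $C_5$, the plan is a short finite case analysis. A dominating set of $C_5$ has size at least $2$, and any two vertices dominate $C_5$ iff they are adjacent or at distance $2$ — in fact any $2$ distinct vertices of $C_5$ dominate it. So after Player 1 picks any vertex, Player 2 can finish in one move; but we must check Player 1 cannot finish first, i.e. no single vertex dominates $C_5$ (true, since $\deg=2<4$). Hence every game on $C_5$ lasts exactly $2$ moves, so $\pv$ (Player 2) wins: $\zD(C_5)\in\sP$.

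For odd $n\ge 7$, I would exhibit a winning strategy for Player 1. The natural idea: Player 1's first move breaks the cycle, after which the game resembles a game on a path $P_{n-1}$ with some vertices already dominated, and Player 1 wants to force an odd total number of moves. I expect the cleanest route is to have Player 1 play one vertex, then argue that the remaining position decomposes into one or two path-like segments whose "game values" Player 1 can control, using a pairing/mirroring response on the rest of the play. Concretely, after Player 1 picks vertex $0$, vertices $1,n-1$ become dominated; Player 1 then treats the rest as the path game on the arc $\{2,\dots,n-2\}$ (a $P_{n-3}$ with both ends already "half-covered") and uses a second-player mirroring strategy on that arc about its center vertex (which exists since $n-3$ is even... wait, $n$ odd makes $n-3$ even, giving no center vertex — so instead mirror about the center edge), responding to each opponent move with its mirror. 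The parity bookkeeping then gives: $1$ (opening) $+$ an even number of mirrored pairs $=$ odd, so Player 1 makes the last move. One must verify the mirror move is always legal (never already chosen, and if the opponent's move was a legal non-snookered choice then so is its mirror) and that the opponent cannot complete a dominating set on a turn where it is Player 1's opponent to move — this is where the $C_5$ exclusion and the $n\ge 7$ hypothesis enter, since for small odd $n$ the mirror of the opponent's move might coincide with an earlier move or the opening vertex.

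The main obstacle is making the mirroring argument for odd $n\ge 7$ fully rigorous: unlike the $3$-involutionary case, Player 1's opening move is forced and asymmetric, so the symmetry Player 1 exploits afterward is only a symmetry of the \emph{remaining} graph $C_n$ minus one vertex (a path), and one must check (i) the induced involution on that path has no fixed vertex that the opponent could play to destroy the pairing, (ii) domination is never completed on the opponent's move, and (iii) the pairing condition $C$ (no white vertex unmatched) is genuinely invariant. I would formalize this by defining the condition $C$ to be "the current white/shaded pattern on $C_n-\{0\}$ is symmetric under the reflection fixing the arc midpoint," verifying it holds after Player 1's opening move, and then checking the two bullet-style requirements of a pairing strategy as laid out in Section~\ref{ss:GameTheory}. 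The finitely many genuinely small cases ($n=7$, possibly $n=9$) I would simply check by hand if the uniform argument has boundary issues.
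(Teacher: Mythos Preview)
Your handling of even $n$ and of $C_5$ is essentially fine (though note your claim that ``any two distinct vertices of $C_5$ dominate it'' is false: adjacent vertices, e.g.\ $0$ and $1$, miss vertex $3$; what is true, and suffices, is that after \po's move the two remaining white vertices are adjacent and \pt\ can cover both).

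The genuine gap is in the odd $n\ge 7$ case: your mirroring strategy does not work. After \po\ plays vertex $0$, the remaining position is the path $1,\ldots,n-1$ with both endpoints shaded, and the reflection $i\mapsto n-i$ sends the central pair $(n-1)/2,(n+1)/2$ to each other at distance $1$, so it is not $3$-involutionary and the Involution Lemma does not apply. Concretely, take $n=7$: \po\ plays $0$; if \pt\ plays the shaded vertex $1$ and \po\ mirrors to $6$, the remaining position is the standard path $\cP_4$ on $\{2,3,4,5\}$, which is in $\sN$ by Corollary~\ref{c:path}, so \pt\ (who is now \nx) wins. Thus mirroring loses for \po. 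The correct response for \po\ to \pt's move $1$ is the center vertex $4$, which wins immediately; but that is not the mirror move, and in general \po's winning replies are not governed by any involution of the path. Your acknowledged ``obstacles'' (i)--(iii) are not technicalities to be checked but actual failures of the approach.

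The paper's route is both shorter and avoids all of this: by vertex-transitivity of $C_n$, \emph{any} first move leaves exactly the standard position $\cP_{n-1}$, so $\zD(C_n)\in\sN$ if and only if $\cP_{n-1}\in\sP$, and Corollary~\ref{c:path} (equivalently Theorem~\ref{t:characterization}) says this holds precisely when $n-1$ is even and $n-1\ne 4$, i.e.\ $n$ is odd and $n\ne 5$. The real work is in the path analysis, which you would need to redo in some form; there is no shortcut via symmetry alone for odd $n$.
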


The analysis of $\cdg$ games follows from that of $\pdg$ games, which is given by the following theorem.

\begin{thm}
\label{t:DomGamePath}
For $n\ge 2$, $\zD(P_n)\in\sN$ if and only if $n$ is odd or $n\in\{2, 6, 8, 10, 12\}$.
\end{thm}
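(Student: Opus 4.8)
The plan is to determine the outcome of $\zD(P_n)$ for every $n$ by splitting on the parity of $n$ and, within the even case, separating a short list of small exceptions from the infinite tail $n\ge 14$ (together with $n=4$), on which \pt\ wins.

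For odd $n\ge 3$ the argument is short. I would have \po\ open by choosing the central vertex $c=(n+1)/2$. After that move $B=\{c\}$, the shaded set is $\{c-1,c+1\}$ (when it is nonempty), and the graph $P_n-\{c\}$ is the disjoint union of two copies of $P_{(n-1)/2}$. The automorphism interchanging these two components has order $2$, has no fixed point, preserves both the shaded set and the white set, and sends each vertex into the other component; hence the position reached is $3$-involutionary. By the Involution Lemma it lies in \sP, and since \po\ has just moved, \po\ wins; thus $\zD(P_n)\in\sN$. (For $n=3$ the opening move already produces a trivial position.)

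For even $n$ the easy endpoints of the classification are $n=2$ (Corollary~\ref{c:Cubes}, giving $\zD(P_2)\in\sN$) and $n=4$: since $\gamma(P_4)=2$, after any first move of \po\ the remaining (at most three) white vertices form a subpath dominated by one further vertex, which \pt\ plays to win, so $\zD(P_4)\in\sP$. The four exceptional values $n\in\{6,8,10,12\}$ are handled by exhibiting an explicit winning first move for \po\ together with a finite analysis of \pt's replies, the branching being cut down by the mirror symmetry of $P_n$ and by the fact that the white set only shrinks. For $n=6$, opening at an endpoint leaves a $P_4$ of white vertices; one checks that after every reply of \pt\ the white set is dominated by a single vertex, which \po\ then plays to win on move $3$. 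For $n=8$ the winning opening is the second vertex: if \pt's reply leaves the white set dominated by one vertex, \po\ finishes on move $3$; otherwise \pt\ has left two far-apart white vertices, \po\ answers with a snooker (waste) move, and then \pt\ has no waste available and must shrink the white set to a single vertex, so \po\ finishes on move $5$. The cases $n=10,12$ are similar in spirit---open near an end and use waste moves to control the parity---but need a somewhat longer, still bounded, search.

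The substantial case is even $n\ge 14$, where I must give \pt\ a winning strategy; this is where I expect the main difficulty. The natural candidate, having \pt\ mirror \po's moves under $\sigma(i)=n+1-i$, does not work outright: $\sigma$ is only $1$-involutionary because it reverses the central edge $\{n/2,\,n/2+1\}$, and a short analysis shows that the only way \po\ can defeat the pure mirror strategy is to steer the game into the symmetric position whose white set is exactly $\{n/2,\,n/2+1\}$ with \po\ to move. The plan is therefore for \pt\ to mirror until \po's move first threatens this central pair, and then to deviate with a single move that resolves the centre and---using the slack available when $n\ge 14$---restores a genuinely $3$-involutionary position, namely two far-apart symmetric halves, after which the Involution Lemma finishes the job; for $n\le 12$ there is too little of the path on either side for such a corrective move to exist, which is exactly why those values fall to \po. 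Specifying this corrective move, checking that it is always legal, and accounting for the waste moves available to both players along the way is the crux of the argument, and it may be cleanest to organize it as an induction on even $n$, reducing a position on $P_n$ to a shorter position of the same type after \po's opening exchange.
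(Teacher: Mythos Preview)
Your treatment of odd $n$ and of the small even cases $n\in\{2,4,6\}$ matches the paper. Your $n=8$ sketch has a small omission: when \pt\ answers your opening at vertex $2$ by playing a neighbour of $2$ (vertex $1$ or $3$), the white set still has four or five vertices and fits neither of your two branches. The paper handles this by having \po\ take the other neighbour, reducing to $\cP_5^1$, already seen to lie in \sP\ during the $n=6$ analysis; for $n\in\{10,12\}$ the paper reports a computer check rather than a hand search.

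The real gap is at even $n\ge 14$. Your mirror-plus-correction plan never specifies the corrective move, when to trigger it, or why the post-correction position is $3$-involutionary, and the waste moves available to both sides make any parity-based correction delicate; as it stands this part is a heuristic, not a proof. The paper avoids the centre issue entirely with a different idea: \pt's first two moves are the two endpoints of $P_n$ (or an arbitrary vertex if \po\ has already taken one). After both players have moved twice, the position is a \emph{standard} sum $\cP_{a_1}\oplus\cP_{a_2}\oplus\cP_{a_3}$ with $a_1+a_2+a_3=n-4$, hence even, and with at least $(n-4)-6\ge 4$ white vertices. Theorem~\ref{t:characterization}---the classification of standard \pdg\ positions built up via the $\one(\cQ)$ and $\four(\cQ)$ bookkeeping---then places this position in \sP\ immediately. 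Your proposal never converts to a standard position and never invokes that theorem, which is the engine driving the $n\ge 14$ case; without it, or a fully specified substitute for it, the argument is incomplete.
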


In Section \ref{s:Lemmas} we develop the lemmas necessary to prove Theorems \ref{t:InvoP}--\ref{t:DomGamePath}, which we do in Section \ref{s:Proofs}.
Section \ref{s:Specifics} contains theorems about games on specific graphs such as caterpillars, Cartesian products of paths and cycles, and the Petersen graph.
We offer several interesting questions, open problems, and a conjecture in Section \ref{s:Comments}.

\section{Key Lemmas}
\label{s:Lemmas}

\subsection{\gdg\ games}
\label{ss:gdg}

For a set of \gdg\ positions $\cQ_i$ on corresponding graphs $G_i$ ($1\le i\le k$), define the {\it game sum} $\cQ=\oplus_{i=1}^k \cQ_i$ to have $B_\cQ=\cup_{i=1}^k B_{\cQ_i}$, $S_\cQ=\cup_{i=1}^k S_{\cQ_i}$, $W_\cQ=\cup_{i=1}^k W_{\cQ_i}$ on the disjoint graph union $+_{i=1}^k G_i$ so that, for each $i\not=j$, no move in $\cQ_i$ affects $\cQ_j$.
Each $\cQ_i$ is called a {\it component} of $\cQ$.
While game sums are normally fairly simple to interpret, the complexity in this case is that players can continue to play on the shaded vertices of a component that has been ``won'' (is trivial).
Again, this is the crucial difference between our game and the classical domination game.

\begin{fct}
\label{f:parity}
For any position $\cQ$ and any vertex $u\in \cQ$, we have $|S_{\cQ(u)}|+|W_{\cQ(u)}|=|S_\cQ|+|W_\cQ|-1$.
\end{fct}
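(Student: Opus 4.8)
The plan is to unwind the definition of the move $\cQ\mapsto\cQ(u)$ and recognize the claimed equation as a pure bookkeeping identity: a single move removes exactly one vertex from $S_\cQ\cup W_\cQ$ (the chosen vertex $u$, which turns black) and merely recolors some white vertices shaded, so the total $|S|+|W|$ drops by exactly one. The cleanest route I would take rests on the observation that in every \gdg\ position $\cP=\{B,S,W\}$ the sets $S$ and $W$ partition the non-black vertices, so $|S|+|W|=|V(G)|-|B|$.

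First I would confirm that $\cQ(u)$ is a legitimate \gdg\ position, i.e.\ that $\{B_{\cQ(u)},S_{\cQ(u)},W_{\cQ(u)}\}$ is genuinely a partition of $V(G)$ with no white vertex adjacent to a black one. This follows directly from the move rules $B_{\cQ(u)}=B_\cQ\cup\{u\}$, $S_{\cQ(u)}=(S_\cQ\setminus\{u\})\cup N_W(u)$, $W_{\cQ(u)}=W_\cQ\setminus(\{u\}\cup N(u))$: the three sets are pairwise disjoint because $N_W(u)\subseteq W_\cQ$ and $u\notin N(u)$, they cover $V(G)$, and $W_{\cQ(u)}$ avoids $N(u)$ while the only newly black vertex is $u$. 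Granting this, and using that $u\in\cQ$ means $u\notin B_\cQ$ so that $|B_{\cQ(u)}|=|B_\cQ|+1$, the identity $|S|+|W|=|V(G)|-|B|$ applied to both $\cQ$ and $\cQ(u)$ gives
\[
|S_{\cQ(u)}|+|W_{\cQ(u)}| \;=\; |V(G)|-|B_{\cQ(u)}| \;=\; |V(G)|-|B_\cQ|-1 \;=\; |S_\cQ|+|W_\cQ|-1 .
\]

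Alternatively, to avoid invoking that $\cQ(u)$ is a position, one can count directly: the union defining $S_{\cQ(u)}$ is disjoint since $N_W(u)\subseteq W_\cQ$ is disjoint from $S_\cQ$, so $|S_{\cQ(u)}|=|S_\cQ\setminus\{u\}|+|N_W(u)|$, while $|W_{\cQ(u)}|=|W_\cQ|-|W_\cQ\cap N[u]|$; splitting on whether $u\in S_\cQ$ or $u\in W_\cQ$ and noting $W_\cQ\cap N[u]=N_W(u)\cup(W_\cQ\cap\{u\})$, the $|N_W(u)|$ terms cancel in either case and the count collapses to $|S_\cQ|+|W_\cQ|-1$. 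I do not expect any genuine obstacle here; the only point that needs a moment's care is the disjointness in $S_{\cQ(u)}=(S_\cQ\setminus\{u\})\cup N_W(u)$, which is precisely what lets the cardinalities add.
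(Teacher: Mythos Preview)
Your proof is correct and follows exactly the paper's approach: the paper's proof is the one-liner ``This holds because $|B_{\cQ(u)}|=|B_\cQ|+1$,'' which is precisely your partition argument $|S|+|W|=|V(G)|-|B|$. Your verification that $\cQ(u)$ is a valid position and your alternative direct count are both sound but more than the paper bothers with for this simple bookkeeping fact.
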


\begin{proof}
This holds because $|B_{\cQ(u)}|=|B_\cQ|+1$.
\end{proof}

\begin{lem}
\label{l:Iso}
For \gdg\ games $\cA$ and $\cB$, if $\cA\cong\cB$ then $\cA\oplus\cB\in\sP$.
\end{lem}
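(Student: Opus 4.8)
The plan is to give \pv\ an explicit ``mirroring'' (Tweedledum–Tweedledee) strategy: whenever \nx\ plays a vertex in one component, \pv\ replies with the image of that vertex under the isomorphism in the other component. Fix a graph isomorphism $\varphi:G_1-B_\cA\to G_2-B_\cB$ witnessing $\cA\cong\cB$, so that $\varphi(S_\cA)=S_\cB$ and $\varphi(W_\cA)=W_\cB$; in particular $\varphi$ restricts to a bijection from the legal moves of $\cA$ to the legal moves of $\cB$. I would maintain the invariant that, after each of \pv's moves, the current position has the form $\cQ_1\oplus\cQ_2$ with $\cQ_1\cong\cQ_2$ (via the appropriate restriction of $\varphi$), and show that under this strategy \nx\ can never be the player to complete a dominating set. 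This can also be phrased in the pairing-strategy language of Section \ref{ss:GameTheory}, with $\f=\varphi$ on moves in $\cA$ and $\f=\varphi^{-1}$ on moves in $\cB$, and with condition $C$ being ``the position is a sum of two isomorphic components''.

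First I would record that if $\cQ_1\cong\cQ_2$ then $\cQ_1$ is trivial if and only if $\cQ_2$ is (since $\varphi$ maps $W_{\cQ_1}$ onto $W_{\cQ_2}$), so $\cQ_1\oplus\cQ_2$ is trivial exactly when both components are; in that case $\cQ_1\oplus\cQ_2\in\sP$ outright, so assume the position is nontrivial, whence both $W_{\cQ_1}$ and $W_{\cQ_2}$ are nonempty. If \nx\ plays a vertex $u$, say (by symmetry) a move in $\cQ_1$, the resulting position $\cQ_1(u)\oplus\cQ_2$ still contains the nonempty set $W_{\cQ_2}$, so it is nontrivial and \nx\ has not won; then \pv\ plays $\varphi(u)$, which is still a legal move in $\cQ_2$ because that component was untouched, producing $\cQ_1(u)\oplus\cQ_2(\varphi(u))$.

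It then remains to verify that $\cQ_1(u)\cong\cQ_2(\varphi(u))$, re-establishing the invariant: the restriction of $\varphi$ to $G_1-(B_\cA\cup\{u\})\to G_2-(B_\cB\cup\{\varphi(u)\})$ is still a graph isomorphism, and from the update rules for $S$ and $W$ together with the fact that $\varphi$ preserves adjacency and the shaded/white partition, one gets $\varphi(S_{\cQ_1(u)})=S_{\cQ_2(\varphi(u))}$ and $\varphi(W_{\cQ_1(u)})=W_{\cQ_2(\varphi(u))}$. By Fact \ref{f:parity} each move strictly decreases $|S|+|W|$, so the game terminates, and since the game is never over immediately after an \nx\ move it must end after a \pv\ move; hence \pv\ makes the last move, i.e.\ $\cA\oplus\cB\in\sP$. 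The only delicate point is the claim that \nx\ can never end the game, and the key observation there is precisely that the isomorphism forces the two components to become trivial simultaneously, so that the ``snookering'' phenomenon — a still-live game containing an already-won (trivial) component — never arises along the mirrored line of play, which is exactly what blocks naive game-sum reasoning in general.
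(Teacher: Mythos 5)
Your proof is correct and rests on the same idea as the paper's: the mirroring map that sends each vertex of one component to its isomorphic image in the other. The paper simply packages this map as a 3-involution of $\cA\oplus\cB$ (defined via $f$ on $\cA$ and $f^{-1}$ on $\cB$) and cites Lemma \ref{l:Involution}, whereas you verify the pairing strategy directly, in effect re-proving the Involution Lemma in this special case.
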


\begin{proof}

Let $\cA$ and $\cB$ be \gdg\ games, $f:\cA\rar\cB$ be an isomorphism. 
Define $\f:\cA\oplus\cB\to \cA\oplus \cB$ by setting $\f(a)=f(a)$ for all $a\in \cA$ and $\f(b)=f^{-1}(b)$ for all $b\in\cB$. 
Then $\f$ is 3-involution of $\cA\oplus\cB$ and so, by Lemma \ref{l:Involution}, $\cA\oplus\cB\in \sP$.
\end{proof}

\subsection{\pdg\ games}
\label{ss:pdg}

To describe \pdg\ games, we first make an observation that will yield simpler notation that will facilitate our analysis.
The observation is that every interior vertex of a path is a cut vertex, so when \nx\ chooses an interior vertex $u$, the game on the original path $P$ becomes a sum of games on two paths defined by $P-u$.
Thus we define the position $\cP_k^i$ to be the path on $k$ vertices having $i$ shaded endpoints.
Then, for example, if $u$ has distance at least two from each endpoint of $\cP_k^i$ then $\cP_k^i(u)\cong\cP_a^h\oplus\cP_b^j$, for some $a,b\ge 2$, $a+b=k-1$, $1\le h\le 2$, $1\le j\le 2$, and $h+j=2+i$.
When $u$ is an endpoint or neighbor of one, it is slightly trickier to write a general formula, but simple to calculate a particular instance; for example, if $u$ is a neighbor of the shaded endpoint of $\cP_7^1$ then $\cP_7^1(u)\cong\cP_1^1\oplus\cP_5^1$.
Notice that, with this notation, there is no mention of $B$ since it no longer exists --- the chosen vertices are not colored black but instead are removed --- so the original definition of $S$ no longer applies.
Instead we know now that $S$ is a subset of the endpoints of paths, and endpoints next to a chosen vertex become shaded.

We define a \pdg\ position $\cQ$ to be a finite sum of such $\cP_k^i$ positions.
We say a \pdg\ position or game $\cQ=\oplus_{j=1}^m\cP_{k_j}^{i_j}$ is {\it standard} if $i_j=2$ when $k_j>1$, and $i_j=1$ otherwise; that is, every endpoint is shaded. 
Thus, if $\cQ$ is standard then, for every move $u$ in $\cQ$, we have that $\cQ(u)$ is standard.
To simplify the notation of standard positions for most of this paper, we write $\cP_k$ in place of $\cP_k^2$ for $k>1$ and $\cP_1$ in place of $\cP_1^1$, with $\cP_0$ denoting the empty position (having no vertices).

Given standard $\cQ = \oplus_{i=1}^m \cQ_i$ where $Q_i\cong\cP_{n_i}$, we define the functions $\one(\cQ)=|\{i\mid n_i=1\}|$, $\four(\cQ)=|\{i\mid n_i=4\}|$, and $\odd(\cQ)=|\{i\mid n_i {\rm\ is\ odd}\}|$.
We say that $\cQ$ is {\it even} if $|W_\cQ|+|S_\cQ|$ is even.
Equivalently, $\odd(\cQ)$ is even.
We say that $\cQ$ is {\it totally even} if both $|W_\cQ|$ and $|S_\cQ|$ are even and $\four(\cQ)$ is even.
Note that $|S_\cQ|$ is even if and only if $\one(\cQ)$ is even.
Thus, an even $\cQ$ is totally even if and only if $\one(\cQ)$ and $\four(\cQ)$ are both even.

Let $\f$ denote the automorphism of any path that swaps its endpoints.
We say that $x_i$ is the {\it center} of $\cQ_i$ if $\f(x_i)=x_i$.
Note that if $x_i$ is the center of $\cQ_i$ then $n_i$ is odd.

\begin{lem}
\label{l:totally_even}
Let $\cQ = \oplus_{i=1}^m \cP_{n_i}$ be standard and nontrivial such that $|S_\cQ|+|W_\cQ|$ is odd. Then \nx\ has a move $u\in \cQ$ such that $\cQ(u)$ is totally even.
\end{lem}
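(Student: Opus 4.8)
The plan is to produce an explicit move $u$ for \nx\ and check it works. Since $|S_\cQ|+|W_\cQ|$ is odd, we have $\odd(\cQ)$ odd, so some component $\cP_{n_i}$ has odd length $n_i$. First I would isolate the easy structural facts: after any move on a component $\cP_n$, the two resulting pieces have lengths summing to $n-1$ and carry two shaded endpoints total (distributed according to where $u$ lands), so in standard notation the move on an odd component produces an even change in $\odd$, in $\one$, and lets us control $\four$. The goal is to pick $u$ so that, simultaneously, $\one(\cQ(u))$ becomes even and $\four(\cQ(u))$ becomes even (recall $\cQ(u)$ is automatically even once we move on an odd component, and then "totally even" reduces to $\one$ and $\four$ both even, as noted just before the lemma).

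Next I would split into cases according to the parities of $\one(\cQ)$ and $\four(\cQ)$, which are independent, and according to the arithmetic of the available odd component lengths mod small numbers. The cleanest approach: choose the odd component $\cP_n$ and the cut-point $u$ inside it. If $u$ is the center, $\cP_n(u) \cong \cP_a \oplus \cP_a$ with $n = 2a+1$; playing an endpoint or its neighbor kills a $\cP_1$ or produces one; playing a vertex at distance $d$ from an endpoint gives $\cP_{d-1}\oplus\cP_{n-d}$ roughly (with the boundary corrections $\cP_1$ as in the excerpt's examples). By varying $d$ through $\{1,\dots,n\}$ one can realize, among the possible $(\,\one,\four\,)$-effects, every parity combination one needs — the point is that from a single odd component of length $n\ge 3$ one can arrange to change $\one$ by $0$ or by an even amount, and $\four$ by $0$ or $1$ (e.g. by splitting off a $\cP_4$), with enough flexibility to fix whichever of the two parities is wrong. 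The small cases $n=1$ and $n=3$ need to be handled by hand, possibly falling back on a second odd component or an even component to absorb a stray $\cP_1$ or $\cP_4$.

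The main obstacle I expect is the bookkeeping when the odd components are all very short (all equal to $1$, or a mix of $1$s and $3$s) so that the "split off a $\cP_4$" trick is unavailable: then one must use the interaction between components, e.g. move on a $\cP_1$ (which just deletes it, changing $\one$ by $1$ and leaving $\four$ alone), or move inside an even component to adjust $\four$ while keeping $\one$ fixed. So I would organize the proof as: (i) reduce "totally even" to "$\one$ and $\four$ both even" using the preceding remarks and Fact~\ref{f:parity}; (ii) show a move on any odd component of length $\ge 5$ can be chosen to fix both parities at once; (iii) dispatch the residual cases where every odd component has length $1$ or $3$ by a short finite analysis, using moves on $\cP_1$'s, $\cP_3$'s, or a convenient even component. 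The verification in each case is a routine parity count via Fact~\ref{f:parity} plus the explicit description of $\cP_k^i(u)$ given in Section~\ref{ss:pdg}.
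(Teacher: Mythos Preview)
Your overall framework is right—reduce ``totally even'' to ``$\one$ and $\four$ both even'' via Fact~\ref{f:parity}, then split on the parities of $\one(\cQ)$ and $\four(\cQ)$—and this matches the paper. But your step (ii) is false as stated, and the case organization in (ii)/(iii) is misaligned with where the actual difficulty lies.

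Concretely, the claim that a move on a single odd component of length $n\ge 5$ can always fix both parities at once fails. A move on $\cP_n$ (standard) produces $\cP_a\oplus\cP_b$ with $a+b=n-1$. To change both $\one$ and $\four$ by an odd amount you would need one piece of length $1$ and one of length $4$, forcing $n=6$; so the $(1,1)$ target is \emph{never} attainable from an odd component. Worse, the $(0,1)$ target can also fail: for $n=9$ every split with a piece of length $4$ forces the other piece to have length $4$ as well, so $\four$ changes by $2$, not $1$, and the endpoint move gives $\cP_8$; thus from $\cP_9$ you cannot change the parity of $\four$ at all. These obstructions occur even when an odd component of length $\ge 5$ is available, so they do not fall under your residual case (iii).

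The paper's proof sidesteps this entirely by keying the move on the parity that is odd rather than on the existence of a large odd component. If $\four(\cQ)$ is odd, there \emph{is} a $\cP_4$; play an interior vertex (if $\one$ is also odd) or an endpoint (if $\one$ is even). If $\four$ is even but $\one$ is odd, there is a $\cP_1$; play it. Only in the case where $\one$ and $\four$ are both even does one use an odd component, and then (since $|S_\cQ|$ is even and $|S_\cQ|+|W_\cQ|$ is odd) $|W_\cQ|$ is odd, so some $n_i\ge 3$ is odd and playing its center gives a symmetric split, keeping both parities even. Four cases, one line each. Your plan can be salvaged by reorganizing along these lines; the fallback moves you mention (on a $\cP_4$ or a $\cP_1$) are exactly what is needed, but they are the primary moves in three of the four cases, not a residual cleanup.
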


\begin{proof}
By Fact \ref{f:parity}, $\cQ(u)$ is even for any move $u$, and we therefore recall that $\cQ(u)$ is totally even if and only if $\one(\cQ(u))$ and $\four(\cQ(u))$ are both even. 
We consider the following four cases:
\begin{quote}
\begin{itemize}
    \item $\one(\cQ)$ and $\four(\cQ)$ are both odd:\: Since $\four(\cQ)$ is odd we can write $\cQ\cong \cA\oplus \cP_4$ for some game $\cA$. Then \nx\ plays an interior vertex $u\in \cP_4$, so that $\one(\cQ(u))=\one(\cQ)+1$ and $\four(\cQ(u))=\four(\cQ)-1$.
    \item $\one(\cQ)$ is even and $\four(\cQ)$ is odd:\: Since $\four(\cQ)$ is odd we can again write $\cQ\cong \cA\oplus \cP_4$. Then \nx\ plays an endpoint $u\in \cP_4$, so that $\one(\cQ(u))=\one(\cQ)$ and $\four(\cQ(u))=\four(\cQ)-1$.
    \item $\one(\cQ)$ is odd and $\four(\cQ)$ is even:\: Since $\one(\cQ)$ is odd we can again write $\cQ\cong \cA\oplus \cP_1$. Then \nx\ plays the unique vertex $u\in \cP_1$, so that $\one(\cQ(u))=\one(\cQ)-1$ and $\four(\cQ(u))=\four(\cQ)$.
    \item $\one(\cQ)$ and $\four(\cQ)$ are both even:\: Since $\one(\cQ)$ is even, so is $|S_\cQ|$, which makes $|W_\cQ|$ odd. Thus there is some $i$ for which $n_i\ge3 $ is odd, and so we write $\cQ\cong \cA\oplus \cP_{n_i}$. Then \nx\ plays the center $u$ of $\cP_{n_i}$. By symmetry, $\one(\cQ(u))$ and $\four(\cQ(u))$ are both even.
\end{itemize}
\end{quote}
Therefore, in all cases there is a move $u$ such that $\cQ(u)$ is totally even.
\end{proof}

\begin{fct}
\label{f:nontrivial}
Let $\cQ = \oplus_{i=1}^m \cP_{n_i}$ be standard, nontrivial and totally even. 
Then for all vertices $u\in \cQ$, the position $\cQ(u)$ is nontrivial.
\end{fct}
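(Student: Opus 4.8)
The statement to prove is: if $\cQ = \oplus_{i=1}^m \cP_{n_i}$ is standard, nontrivial, and totally even, then $\cQ(u)$ is nontrivial for every move $u \in \cQ$. The plan is to argue by contradiction: suppose some move $u$ makes $\cQ(u)$ trivial, and derive a parity violation of total evenness. If $\cQ(u)$ is trivial, then $u$ must have been the unique white vertex remaining in whichever component $\cP_{n_i}$ contains $u$ (since a move only dominates $u$ and its neighbors, all other white vertices in $\cQ$ survive, and white vertices in other components are untouched). So I first reduce to understanding which standard components $\cP_{n_i}$ have the property that a single move empties their white set: I would check the small cases directly. In $\cP_1 = \cP_1^1$ the lone vertex is white and shaded-endpoint-free, so playing it trivializes that component; in $\cP_2$ both vertices are white, so one move leaves one white vertex — not trivializing; in $\cP_3$ the two endpoints are shaded and the middle vertex is white, so playing the middle vertex trivializes it; for $n_i \ge 4$ a standard $\cP_{n_i}$ has $n_i - 2 \ge 2$ white interior vertices and no single move can dominate all of them. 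So the only components whose white set can be wiped out by one move are $\cP_1$ and $\cP_3$.

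Now I split into those two cases. If $u \in \cP_1$, then after the move that component vanishes entirely, so $\one(\cQ(u)) = \one(\cQ) - 1$ while $\four$ and all other component counts are unchanged; since $\cQ$ is totally even, $\one(\cQ)$ is even, hence $\one(\cQ(u))$ is odd, so $\cQ(u)$ is not even — contradicting Fact \ref{f:parity} which says $\cQ(u)$ is even (every move changes parity, and totally even $\cQ$ is even). Wait — more carefully: $\cQ$ totally even means $|W_\cQ| + |S_\cQ|$ even, so $\cQ(u)$ has $|W| + |S|$ odd by Fact \ref{f:parity}; but a trivial position has $W = \emptyset$, so $|W_{\cQ(u)}| + |S_{\cQ(u)}| = |S_{\cQ(u)}|$, which would have to be odd. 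Meanwhile, removing a $\cP_1$ component removes exactly one white-and-shaded... hmm, actually $\cP_1 = \cP_1^1$ — the "$1$" superscript means the single endpoint is shaded, but it's also the only vertex and it's white? I would clarify via the definitions: in the $\pdg$ notation, $\cP_1$ denotes $\cP_1^1$, and its single vertex is white (it must be dominated), contributing $1$ to $|W_\cQ|$. Removing it drops $|W_\cQ|$ by $1$, making $|W_{\cQ(u)}|$ have opposite parity; combined with the fact that a trivial position needs $|W| = 0$, I get the contradiction from the parity of $|W|$ alone, using that $|W_\cQ|$ even (which follows from total evenness plus $|S_\cQ|$ even). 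The cleanest route: $|W_\cQ|$ is even since $|S_\cQ|$ is even and $|W_\cQ| + |S_\cQ|$ is even; playing in $\cP_1$ gives $|W_{\cQ(u)}| = |W_\cQ| - 1$, which is odd, so $\cQ(u)$ is nontrivial.

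If $u \in \cP_3$ is the middle vertex, then $\cP_3$ is replaced by $\cP_1^1 \oplus \cP_1^1$ — wait, no: playing the center of $P_3$ removes that vertex and both neighbors become shaded singletons, i.e. $\cP_3(u) \cong \cP_1^2 \oplus \cP_1^2$? No — standard $\cP_1$ means $\cP_1^1$; after playing the middle of $\cP_3$, each endpoint becomes a shaded singleton, but a singleton with one shaded "endpoint" is just $\cP_1^1$ in the standard notation. Either way, these are white-free components, so the white count in that component drops from $1$ to $0$, and overall $|W_{\cQ(u)}| = |W_\cQ| - 1$, odd — same contradiction. Alternatively I can track $\one$: the center move on $\cP_3$ creates two new singletons, so $\one$ increases by $2$, keeping its parity, but this still leaves the $|W|$-parity argument as the decisive one, so I would lead with that.

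\textbf{Main obstacle.} The only delicate point is pinning down exactly which standard components can be trivialized in one move and making the small-case check for $\cP_1, \cP_2, \cP_3, \cP_4$ airtight — in particular being careful that $\cP_2 = \cP_2^2$ has \emph{both} vertices white (two white vertices, no shaded ones), so it genuinely cannot be trivialized in one move, and that for $n \ge 4$ the at-least-two white interior vertices are non-adjacent enough that no single vertex dominates them all. Once that structural claim is in hand, the parity bookkeeping is immediate. I don't actually need the full strength of "totally even" here — just that $|W_\cQ|$ is even (equivalently, $\odd(\cQ)$ and... actually just $|W_\cQ|$ even) — but I'll phrase it using the totally-even hypothesis as given since that's what later results will invoke.
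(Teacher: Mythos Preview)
Your plan has a real error that stems from misreading the standard-position convention. In a standard $\cP_k$ (that is, $\cP_k^2$ for $k\ge 2$ and $\cP_1^1$ for $k=1$), the endpoints are \emph{shaded}, not white. Hence $\cP_1$ is a single shaded vertex with $W=\emptyset$, and $\cP_2$ has both vertices shaded with $W=\emptyset$; neither has any white vertex to ``wipe out'' --- both are already trivial components. Conversely, your claim that for $n_i\ge 4$ no single move dominates all the white interior vertices is false: in $\cP_4$ (pattern $s$--$w$--$w$--$s$) playing either interior vertex dominates the other, and in $\cP_5$ ($s$--$w$--$w$--$w$--$s$) playing the center dominates all three whites. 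So the components that can be made trivial by one move are $\cP_3,\cP_4,\cP_5$, while the already-trivial components that the rest of $\cQ$ may consist of are $\cP_1$ and $\cP_2$. Your case analysis (``only $\cP_1$ and $\cP_3$'') therefore misses exactly the case that matters.

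The paper's proof takes the following route: an even nontrivial position winnable in one move must be of the form $(\oplus\,\cP_1)\oplus(\oplus\,\cP_2)\oplus\cP_n$ with $n\in\{3,4,5\}$; evenness forces $n\equiv\one(\cQ)\pmod 2$, so if $\one(\cQ)$ is even then $n=4$, giving $\four(\cQ)=1$. Either way the position is not totally even. Your parity idea on $|W_\cQ|$ can be made to work too --- since $|W_\cQ|$ is even and at most $3$ white vertices lie in $N[u]$, one gets $|W_\cQ|=2$, forcing the active component to be $\cP_4$ and hence $\four(\cQ)$ odd --- but you need to first fix the shading bookkeeping to arrive there.
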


\begin{proof}
The only even positions that can be won in one move are of the form $\left(\oplus_{i=1}^m \cP_1\right)\oplus\cP_4$, where $m$ is even. 
Such positions are not totally even. 
\end{proof}

\begin{lem}
\label{l:pairs}
For any \pdg\ positions $\cQ$, $\cA$, and $\cB$, with $\cA\equiv\cB$, we have that $\cQ\oplus\cA\oplus\cB\in\sP$ if and only if $\cQ\in\sP$.
\end{lem}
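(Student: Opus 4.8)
The plan is to run a Tweedledum--Tweedledee (mirroring) argument on the pair $\cA\oplus\cB$, layered on top of whatever winning strategy governs $\cQ$ alone.

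First I would reduce the biconditional to a single implication. Since every position lies in exactly one of $\sP$ and $\sN$, it suffices to prove that $\cQ\in\sP$ implies $\cQ\oplus\cA\oplus\cB\in\sP$: if instead $\cQ\in\sN$, then $\nx$ has a move $u\in\cQ$ with $\cQ(u)\in\sP$, and playing $u$ in $\cQ\oplus\cA\oplus\cB$ yields $\cQ(u)\oplus\cA\oplus\cB$, which is again of the stated form and, by the forward implication applied to $\cQ(u)$, lies in $\sP$; hence $\cQ\oplus\cA\oplus\cB\in\sN$. So the two directions are equivalent and only the forward one needs proof.

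For the forward implication I would induct on the total size $|W_\cQ|+|S_\cQ|+|W_\cA|+|S_\cA|+|W_\cB|+|S_\cB|$, exhibiting a strategy for $\pv$ that preserves the invariant ``the current position is $\cQ'\oplus\cA'\oplus\cB'$ with $\cQ'\in\sP$ and $\cA'\equiv\cB'$'' across each $\nx$--$\pv$ pair of moves. If $\cA$ is empty the claim is immediate (then $\cQ\oplus\cA\oplus\cB=\cQ$), so assume not. If $\nx$ plays inside $\cA'$ (symmetrically, inside $\cB'$), $\pv$ answers with the corresponding move inside $\cB'$ under the witnessing equivalence; this leaves $\cQ'$ untouched, keeps the two halves equivalent, lowers the total size by two, and the induction hypothesis finishes. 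If $\nx$ plays a move $u$ inside a nontrivial $\cQ'$, then $\cQ'(u)\in\sN$ (as $\cQ'\in\sP$), so $\pv$ plays a reply $v$ with $\cQ'(u)(v)\in\sP$, again restoring the invariant with the total size lowered by two. The base case and the pure-mirror case ($\cQ=\cP_0$) are handled exactly as in Lemma \ref{l:Iso}, and Fact \ref{f:parity} supplies the move-count bookkeeping certifying that in every line it is $\pv$ who makes the last move.

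The crux --- and the place where the argument must be handled with care --- is the feature that separates $\gdg$ from the classical domination game: a ``won'' (trivial) component can still be played on by choosing one of its shaded vertices. Thus $\nx$ may snooker inside an already-trivial $\cQ'$, or play a move that empties a component and leaves $\pv$ with no obvious in-kind reply; in those lines $\pv$ must absorb the move into the trivial remnant of $\cQ'$ (or hand it off into $\cA'\oplus\cB'$) while keeping the overall parity right, and the naive invariant above has to be refined accordingly. This is exactly why the parity notions ``even'' and ``totally even'' are introduced, and why $\equiv$ should be a congruence for $\oplus$ that is fine enough to guarantee $\pv$ a parity-correct reply at every stage; confirming that this bookkeeping never breaks --- using Fact \ref{f:nontrivial} and Lemma \ref{l:totally_even} to rule out $\nx$ ending the game on its own move --- is the delicate part I would expect to spend the most effort on.
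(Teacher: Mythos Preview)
Your mirroring-plus-local-strategy plan is exactly the paper's: when $\po$ plays in $\cA$ (or $\cB$), $\pt$ answers via the isomorphism $\psi$, and when $\po$ plays in $\cQ$ one uses the $\sP$-strategy there; the converse is obtained by opening with a winning move in $\cQ$. The paper's proof is extremely terse---it literally only spells out the mirror response in $\cA$/$\cB$ and leaves the $\cQ$-side implicit---whereas you go further and correctly flag the delicate case: what happens when $\nx$ snookers inside an already-trivial $\cQ'$?

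You are right to worry, but the parity machinery you reach for cannot close this gap, because the lemma as stated (reading $\equiv$ as $\cong$, which is exactly what the paper's proof uses via its isomorphism $\psi$) is \emph{false}. Take $\cQ=\cP_1$ and $\cA=\cB=\cP_3$. Then $\cQ$ is trivial, hence $\cQ\in\sP$; but in $\cP_1\oplus\cP_3\oplus\cP_3$, $\po$ wins by first playing the lone $\cP_1$, leaving $\cP_3\oplus\cP_3$, which is in $\sP$ by Lemma~\ref{l:Iso}. Thus $\cQ\oplus\cA\oplus\cB\in\sN$ while $\cQ\in\sP$. The obstruction is precisely the one you identified: a trivial $\cQ$ of odd size hands $\po$ a free tempo that $\pt$ cannot absorb without breaking the $\cA\cong\cB$ symmetry. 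Fact~\ref{f:nontrivial} and Lemma~\ref{l:totally_even} do not help here---they produce good moves for the mover from odd or totally-even positions, not a way for the responder to repair parity once it is lost---so your closing paragraph does not supply a fix. The forward implication therefore fails in general (and the paper's reverse implication, being derived from the forward one, inherits the same defect); some extra hypothesis, such as $\cQ$ even, is genuinely needed.
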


\begin{proof}
Let $\ps:\cA\rar\cB$ be an isomorphism.
If $\cQ\in\sP$ then whenever \po\ plays $u\in\cA$ (resp. $\cB$) then \pt\ plays $\ps(u)\in\cB$ (resp. $\ps^{-1}(u)\in\cA$).
Hence $\cQ\oplus\cA\oplus\cB\in\sP$.
On the other hand, if $\cQ\in\sN$, then \po\ plays $u\in\cQ$ such that $\cQ(u)\in\sP$.
By the above argument we have $\cQ(u)\oplus\cA\oplus\cB\in\sP$, and so $\cQ\oplus\cA\oplus\cB\in\sN$.
\end{proof}

\begin{thm}
\label{t:characterization}
Let $\cQ = \oplus_{i=1}^m \cP_{n_i}$ be standard and nontrivial. 
Then $\cQ\in\sP$ if and only if $\cQ$ is even and $|W_\cQ|\ge 4$, or $\cQ$ is isomorphic to 
one of the following, for any nonnegative $h$:
\begin{itemize}
    \item $\left(\oplus_{i=1}^m \cP_1\right)\oplus\left(\oplus_{i=1}^h \cP_2\right)\oplus\cP_3\oplus \cP_3$, where $m$ is even
    \item $\left(\oplus_{i=1}^m \cP_1\right)\oplus\left(\oplus_{i=1}^h \cP_2\right)\oplus\cP_4\oplus \cP_3$, where $m$ is odd
\end{itemize}
\end{thm}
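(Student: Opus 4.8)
The plan is to prove the equivalent statement ``$\cQ\in\sP$ if and only if $\cQ$ is \emph{good}'' by strong induction on the size $N=|W_\cQ|+|S_\cQ|$, where I call a standard nontrivial position \emph{good} if it is even with $|W_\cQ|\ge 4$ or is isomorphic to one of the two listed families (note that both families are even). The smallest nontrivial positions form the base and are checked directly. Two preliminary observations drive everything. First, by Fact~\ref{f:parity} each move changes the parity of $N$, so the children of an even position are odd and those of an odd position are even; moreover a single move deletes at most three vertices of $W$ (the chosen vertex and its at most two white path-neighbours), so a position with $|W_\cQ|\ge 4$ has only nontrivial children. Second, straight from the definitions: an odd nontrivial position is never good, while a \emph{totally even} nontrivial position is always good --- for such a position $|W|$ is even and positive, hence $|W|\ge 2$; if $|W|=2$ then either a single $\cP_4$ accounts for it, forcing $\four$ odd (impossible), or exactly two $\cP_3$'s do, and total evenness then forces $\one$ even, landing in the first family; and if $|W|\ge 4$ it is even with $|W|\ge 4$.

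For ``good $\Rightarrow\cQ\in\sP$'' I show every move of $\nx$ lands in $\sN$. If $\cQ$ is even with $|W_\cQ|\ge 4$, then for any move $u$ the child $\cQ(u)$ is odd, nontrivial, and smaller, hence not good, hence in $\sN$ by induction. If $\cQ$ belongs to one of the two families, $\pv$ plays a pairing-type response: a move of $\nx$ that merely changes the number of $\cP_1$'s --- playing a $\cP_1$, playing a $\cP_2$, or splitting the $\cP_4$ --- is answered by $\pv$ playing a $\cP_1$, restoring the parity of $\one$ and producing a smaller position in the same family, in $\sP$ by induction; a move of $\nx$ that touches a $\cP_3$ (its centre or an endpoint) or turns the $\cP_4$ into a $\cP_3$ leaves a smaller \emph{odd} position, which is not good and hence in $\sN$ by induction. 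Either way $\cQ(u)\in\sN$, so $\cQ\in\sP$; Lemma~\ref{l:Iso} and Lemma~\ref{l:pairs} streamline the $\cP_1$/$\cP_2$-mirroring bookkeeping.

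For ``$\cQ$ nontrivial and not good $\Rightarrow\cQ\in\sN$'' I exhibit a winning move for $\nx$. If $\cQ$ is odd, Lemma~\ref{l:totally_even} supplies a move $u$ with $\cQ(u)$ totally even; that child is either trivial, hence in $\sP$, or nontrivial and (by the observation above) good, hence in $\sP$ by induction, so $\cQ\in\sN$. If instead $\cQ$ is even and not good, then $|W_\cQ|\le 3$, so there is essentially one ``long'' component --- a $\cP_3$, a $\cP_4$, a $\cP_5$, or a second $\cP_3$ --- and I would argue that the parity forced by ``$\cQ$ even, not in either family'' always lets $\nx$ either play the central vertex of that component, leaving an all-short (hence trivial) position and winning outright, or make a single move into a smaller $\sP$-position provided by induction; the configurations where neither is possible are exactly the ones that turn out to be good, contradicting the hypothesis.

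The heart of the argument --- and the main obstacle --- is this last step: the invariant ``even with $|W|\ge 4$'' is fragile at $|W_\cQ|\in\{2,3\}$, so one must enumerate, by the multiset of short components, all even non-good positions of small width together with all positions of the two families, checking in each that $\nx$ from a non-good position always has a move into $\sP$ and that $\pv$ from a family position can always answer without slipping into $\sN$. The delicacy lies entirely in the parity bookkeeping of ``wasted'' snooker moves on $\cP_1$ and $\cP_2$ components, whose only effect is to toggle whose turn it is; Fact~\ref{f:nontrivial}, Lemma~\ref{l:Iso}, and Lemma~\ref{l:pairs} are the tools that tame this, but the small-width enumeration is where the real case work sits.
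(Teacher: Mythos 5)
Your overall architecture --- induction on $|W_\cQ|+|S_\cQ|$, the parity flip from Fact~\ref{f:parity}, Lemma~\ref{l:totally_even} for odd positions, and the observation that a totally even nontrivial position is ``good'' --- matches the paper's. But your pairing strategy for the two listed families contains a losing move. Take the second family with $m=1$, $h=0$, so $\cQ\cong\cP_1\oplus\cP_4\oplus\cP_3$, and let $\nx$ split the $\cP_4$ by playing one of its interior white vertices, giving $\cP_1\oplus\cP_1\oplus\cP_2\oplus\cP_3$. Your prescribed reply of playing a $\cP_1$ leaves $\cP_1\oplus\cP_2\oplus\cP_3$ with the opponent to move, who wins at once by playing the centre of the $\cP_3$; so the reply hands over the game (the resulting position is not ``in the same family,'' it has only one long component). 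The correct reply to the split is to play the centre of the $\cP_3$ immediately and win, and likewise when $\nx$ plays the lone $\cP_1$ there is none left to mirror with --- the right reply is a shaded endpoint of the $\cP_4$, reducing to $\cP_3\oplus\cP_3$. In fact your own inductive frame makes any explicit strategy unnecessary here: a family position is even and its white vertices lie in two distinct components, so no single move trivializes it; hence every child is odd, nontrivial and smaller, therefore not good and in $\sN$ by induction, and $\cQ\in\sP$ follows.

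The more serious issue is the deferred ``small-width enumeration,'' which you rightly call the heart of the matter but do not carry out --- and if you carry it out with the argument just described, it contradicts the statement you are proving. The even position $\cQ\cong(\oplus_{i=1}^m\cP_1)\oplus(\oplus_{i=1}^h\cP_2)\oplus\cP_3\oplus\cP_3\oplus\cP_3$ with $m$ odd has its three white vertices in three distinct components, so no move trivializes it; every child is odd and nontrivial, hence not good, hence in $\sN$ by induction; so $\cQ\in\sP$, even though $\cQ$ is even with $|W_\cQ|=3<4$ and lies in neither listed family. (A direct check of $\cP_1\oplus\cP_3\oplus\cP_3\oplus\cP_3$ confirms the second player wins.) Your closing claim that ``the configurations where neither is possible are exactly the ones that turn out to be good'' is therefore false, and the enumeration cannot be completed as planned. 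For what it is worth, the paper's own treatment of this configuration leans on Lemma~\ref{l:pairs}, whose mirroring argument fails for exactly the same reason your $\cP_1$-reply does (a player can steal the mirror by spending a move on a shaded vertex outside the mirrored pair), so the printed proof stumbles at the same spot. What your induction actually establishes is the cleaner statement that a nontrivial standard position is in $\sP$ if and only if it is even and no single vertex dominates all of its white vertices, i.e.\ it is not of the form $(\oplus\cP_1)\oplus(\oplus\cP_2)\oplus\cP_k$ with $k\in\{3,4,5\}$.
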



We recall that $\cQ = \oplus_{i=1}^m \cP_{n_i}$ is a finite position and therefore must lead to a trivial position in finitely many steps. Thus, $\cQ$ is in exactly one of $\sP$ and $\sN$.

\begin{proof}[Proof of Theorem \ref{t:characterization}]
Say that $\cQ$ has {\it condition $C$} if $\cQ$ is totally even or $\cQ$ is even and $|W_\cQ|\ge 4$.
We argue that if $\cQ$ has condition $C$ then $\cQ(u)$ is nontrivial for all $u\in\cQ$, and then use Lemma \ref{l:totally_even} to conclude that $\cQ(u,v)$ has condition $C$.
Thus \po\ can never win $\cQ$; i.e. $\cQ\in\sP$.
Indeed, Fact \ref{f:nontrivial} takes care of the totally even case, and $|W_\cQ|\ge 4$ implies that $|W_{\cQ(u)}|\ge 1$ for all $u\in\cQ$. This shows that if $\cQ$ is even and $|W_\cQ|\ge 4$, then $\cQ\in \sP$.

Next, we look at the two cases listed in the statement of the theorem. We can assume that $h=0$ because, for any position $\cA$, $\cA\in N$ if and only if $\cA\oplus \cP_2\in N$.
First, suppose that $\cQ\cong \left(\oplus_{i=1}^m \cP_1\right)\oplus\cP_3\oplus \cP_3$, where $m$ is even. 
Then we can write it as $\cQ\cong \cA\oplus \cA$ where $\cA\cong \left(\oplus_{i=1}^{m/2} \cP_1\right)\oplus\cP_3$. 
Then $\cQ\in \sP$ by Lemma \ref{l:Iso}. 
Second, suppose that $\cQ\cong \left(\oplus_{i=1}^m \cP_1\right)\oplus\cP_4\oplus \cP_3$, where $m$ is odd. 
Without loss of generality, we can assume that $m=1$. 
If \po\ chooses $u\in \cP_3$ or $v\in W_{\cP_4}$, then \pt\ will choose $v$ or $u$, respectively, to win. 
If \po\ chooses $u\in S_{\cP_1}$ or $v\in S_{\cP_4}$, then \pt\ will choose $v$ or $u$, respectively, to yield $\cQ(u,v)\cong\cP_3\oplus\cP_3$, which \pt\ wins by Lemma \ref{l:Iso}.
Hence $\cQ\in\sP$.

Conversely, by contrapositive, we first suppose that $\cQ$ is odd. 
By Lemma \ref{l:totally_even}, \po\ has a move $u$ such that $\cQ(u)$ is totally even and thus in $\sP$. 
Therefore, $\cQ\in \sN$. 

The only remaining case is $\cQ$ is even, $|W_\cQ|<4$, and $\cQ$ is not isomorphic to one of the games listed in the statement of the theorem. If $\cQ\cong \left(\oplus_{i=1}^m \cP_1\right)\oplus\left(\oplus_{i=1}^h \cP_2\right)\oplus\cP_3\oplus \cP_3\oplus \cP_3$, then, $\pt$ plays the white vertex $u$ in one of the copies of $\cP_3$ and $\cQ(u)\in \sP$ by Lemma \ref{l:pairs}.
Otherwise, $\cQ\cong \left(\oplus_{i=1}^m \cP_1\right)\oplus\left(\oplus_{i=1}^h \cP_2\right)\oplus\cP_k$, where $k\in\{3,4,5\}$ and $m\equiv k\pmod{2}$.
For each $k\in \{3,4,5\}$, there are at most three white vertices and they are all consecutive. Thus, \po\ can win in one move.
\end{proof}
\begin{rem}
In the notation of Theorem \ref{t:characterization}, suppose that $|W_\cQ|\ge 10$. 
Then, after any move $u$ of \po, any response $v$ by \pt\ keeps the game even by Fact \ref{f:parity}. 
Also, $|W_{\cQ(u,v)}|\ge 4$ and so $\cQ(u,v)\in \sP$ by Theorem \ref{t:characterization}. 
Therefore, while there remain at least 10 white vertices, \pt's response to \po\ can be arbitrary.
\end{rem}

\begin{cor}
\label{c:path}
The position $\cP_k\in \sP$ if and only if $k$ is even and $k\ne 4$.
\end{cor}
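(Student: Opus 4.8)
This is an immediate corollary of Theorem~\ref{t:characterization}, applied to the one-component position $\cQ=\cP_k$. First I would note that for $k=2$ the position $\cP_k$ is trivial (it has no white vertex), hence lies in $\sP$, which is consistent with the stated characterization since $2$ is even and $\ne 4$; the substance of the corollary is the range $k\ge 3$, where $\cP_k$ is standard and, because $|W_{\cP_k}|=k-2\ge 1$, nontrivial, so Theorem~\ref{t:characterization} is available.

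Next I would record the two parameters the theorem needs. The shaded vertices of $\cP_k$ are exactly its two endpoints, so $|S_{\cP_k}|=2$ and $|W_{\cP_k}|=k-2$; hence $|S_{\cP_k}|+|W_{\cP_k}|=k$, so $\cP_k$ is even precisely when $k$ is even. The only remaining ingredient is the observation that a connected standard position $\cP_k$ is never isomorphic to either of the two exceptional families in Theorem~\ref{t:characterization}: every position in those families is a disjoint union of at least two nonempty paths --- the displayed lines contain $\cP_3\oplus\cP_3$ and $\cP_4\oplus\cP_3$ respectively, plus possibly further $\cP_1$ and $\cP_2$ summands --- so it has at least two components, whereas $\cP_k$ has exactly one, and an isomorphism of standard positions is in particular a graph isomorphism of the underlying path-forests, hence preserves the number of components.

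With these facts the corollary follows by a short case analysis. If $k$ is odd, then $\cP_k$ is not even and is not one of the exceptions, so Theorem~\ref{t:characterization} gives $\cP_k\notin\sP$, i.e.\ $\cP_k\in\sN$. If $k=4$, then $\cP_k$ is even but $|W_{\cP_4}|=2<4$ and again it is not an exception, so $\cP_4\in\sN$. If $k$ is even with $k\ge 6$, then $\cP_k$ is even and $|W_{\cP_k}|=k-2\ge 4$, so $\cP_k\in\sP$ by the theorem. Combining this with the trivial case $k=2$ shows that $\cP_k\in\sP$ exactly when $k$ is even and $k\ne 4$.

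There is no real obstacle, since all of the combinatorial work already lives inside Theorem~\ref{t:characterization}; the only point that takes a moment of care is verifying that $\cP_k$ cannot coincide with one of the two exceptional families, for which the component-count argument above (or, equivalently, the remark that each exceptional family carries at least one $\cP_3$ summand together with a second odd-or-length-four summand) suffices.
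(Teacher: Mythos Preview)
Your proof is correct and follows exactly the same approach as the paper, which simply states that the result is a special case of Theorem~\ref{t:characterization}. You have carefully spelled out the parity and white-vertex counts and verified that $\cP_k$ does not fall into the exceptional families, which is precisely the content the paper leaves implicit.
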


\begin{proof}
    This follows as a special case of Theorem \ref{t:characterization}.
\end{proof}

\section{Proofs} 
\label{s:Proofs}

\subsection{Proof of Theorem \ref{t:InvoP}}
\label{ss:InvoP}

For the first 3 cases, we show that the graph under consideration 
has a 3-involution.
Then the results follow from Lemma \ref{l:Involution}.

\begin{enumerate}

\item 
This was proved in Example \ref{e:CyclePower}.
\hfill$\diamond$

\item
For case (a), let $\f':G_0\to G_0$ be a 3-involution, and define the involution $\f$ on $G_0\mathbin{\gbox} H$ by setting $\f(g, h)=(\f'(g),h)$.
For case (b), let $\f':G_0\to G_0$ be a 2-involution and $\f'':H\to H$ be a 1-involution, and define the involution $\f$ on $G_0\mathbin{\gbox} H$ by setting $\f(g, h)=(\f'(g),\f''(h))$.
In both cases, $\f$ is 3-involutionary.
\hfill$\diamond$

\item 
Let $G_0=(V, E)$ and $H=(W, F)$, and set $G=(G_0,(x,\f(x))\cdot ((H,H),(y,y))$. We will say that $W=\{w_1,...,w_n\}$, and define $W_i=\{w_1^i,...,w_n^i\}$ for $i=1,2$. Then we can write $V(G)=V\cup W_1\cup W_2$. Let $\f:G_0\to G_0$ be a 3-involution. Then define $\overline\f:G\to G$ by setting $\overline\f(u)=\f(u)$ for $u\in V(G)$, $\overline\f(w_j^1)=w_j^2$ for $w_j^1\in W_1$, and $\overline\f(w_j^2)=w_j^1$ for $w_j^2\in W_2$. Then $\overline\f$ is a 3-involution. 
\hfill$\diamond$

\item 
We prove this statement for a broader class of graphs and for a broader set of positions on them.
Let $\cG$ be the set of positions $\cQ$ on  graphs $G=((G_0,\bx)\cdot (\bH,\by))\cup G_1\cup G_2$, where $G_1\cup G_2$ is an independent set,
with the following properties: 
\begin{itemize}
    \item 
    $|V(G_0)|\ge 2$;
    \item 
    $\bx=V(G_0)$;
    \item 
    no $x_i\in \cQ_B$;
    \item 
    each $H_i\cong K_{1,m_i}$, with odd $m_i\ge 1$;
    \item 
    each $y_i$ is a dominating vertex of $H_i$;
    \item 
    $G_1$ is a set of evenly many isolated vertices in $\cQ_W$; and
    \item 
    $G_2$ is a set of evenly many isolated vertices in $\cQ_S$.
\end{itemize}
For such $\cQ\in\cG$, we prove by induction on $|\cQ|$ that $\cQ\in\sP$.
At each stage, when a player plays a vertex $v$ in some graph $G$, we remove black vertices so that $G(v)=G-v$.
By doing so, when isolated vertices occur, they get moved in $G_1\cup G_2$ according to their color.

Let $\cQ\in \cG$ and $k=|V(G_0)|$.
We write $N_{H_i}(y_i)=\{z_{i,1},\ldots,z_{i,m_i}\}$
and suppose that \po\ plays $u\in H_i$.
Since $k\ge 2$, there is some $j\not=i$ such that $z_{j,1}\in\cQ_W(u)$,
and so $u$ is not a winning move.
We are done if \pt\ has a winning move, so we assume otherwise and show that \pt\ has a move $v\in\cQ(u)$ that makes $\cQ(u,v)\in\cG$, which will complete the induction.
For ease of notation, relabel the $\{H_i\}$ so that \po\ plays in $H_k$.

First consider the case that $u=x_k$.
If $k=2$ then \pt\ has a winning move $v=x_1$, so we must have $k> 2$.
Thus \pt\ plays $v=z_{k,m_k}$, so that $\cQ(u,v)\in\cG$.
This is because an even number of vertices, namely $\{z_{k,1},\ldots,z_{k,m_k-1}\}$, have been moved into $G_2$.

Next consider the case that $u=z_{k,m_k}$.
If $m_k=1$ and $k=2$, then \pt\ has the winning move $v=x_1$, so we must have either $m_k\ge 2$ or $k\ge 3$.
If $m_k\ge 2$, then \pt\ can play $v=z_{k,m_{k-1}}$, while if $k\ge 3$ then \pt\ can play $v=x_k$.
In either case, we have $\cQ(u,v)\in\cG$.
In the former case, this is because no new isolated vertices were created and the oddness of leaves was maintained.
In the latter case, this is because an even number of shaded leaves were moved to $G_2$.
\hfill$\diamond$

\item
Rewrite the bridging $G=(G_0,x)$--$(G_0,x)^k$ as $(G_0,x_0)$--$(\bG,\bx)$, where $\bG=(G_1,\ldots,G_k)$, $\bx=(x_1,\ldots,$ $x_k)$, and each $G_i\cong G_0$.
Since $k$ is odd, we may pair $G_j$ with $G_{j+1}$ for all even $j<k$, which allows us to extend the identity map on $G_0$ to the automorphism $\f$ of $G$ composed of the natural isomorphisms between each pair $G_j$ and $G_{j+1}$.
Now we define the corresponding mirroring strategy for \pt\ that plays $\f(u)$ for each play $u$ by \po.
We claim that this is a winning strategy.

Write $z_i$ for a singleton in $G_i-x_i$.
Suppose that \po\ wins and let $u$ be a winning move by \po.
That is, there is a position $Q$ on $G$ such that $W_Q\not=\mt$ and $W_{Q(u)}=\mt$.
This means that there is some $w\in W_Q\cap N[u]$. 
Because $\f$ is an isomorphism between $G_j$ and $G_{j+1}$, this yields $\f(w)\in W_Q\cap N[\f(u)]$.
Because $\f(w)\not\in W_{Q(u)}$, it must be that $\f(w)\in N(u)$, and so $u=x_i$, for some $i$.
But this implies that $z_i\in W_Q$ since $z_i$ is a leaf, and so $\f(z_i)\in W_Q$.
Since $\f(z_i)\not\in N(x_i)$ , we arrive at the contradiction $\f(z_i)\in W_{Q(u)}$.
Hence \pt\ wins.
\hfill$\diamond$

\end{enumerate}
\noindent
This completes the proof of Theorem \ref{t:InvoP}.
\qed

\subsection{Proof of Theorem \ref{t:DomGameCycle}}
\label{ss:DomGameCycle}

Recall the statement of Theorem \ref{t:DomGameCycle}: 
For $n\ge 3$, $\zD(C_n)\in\sN$ if and only if $n$ is odd but not equal to 5.
Let $\cD_n=\zD(C_n)$.
Because of the symmetry of $C_n$, for any $u$ we have $\cD_n(u)\cong \cP_{n-1}$, and so $\cD_n\in\sN$ if and only if $\cP_{n-1}\in\sP$.
The result follows from Corollary \ref{c:path}.
\qed

\subsection{Proof of Theorem \ref{t:DomGamePath}}
\label{ss:DomGamePath}

Recall the statement of Theorem \ref{t:DomGamePath}: 
For $n\ge 2$, $\zD(P_n)\in\sN$ if and only if $n$ is odd or $n\in\{2, 6, 8, 10, 12\}$.
Let $\cD_n=\zD(P_n)$ and $\po=\nx$, and write $\{0,1,\ldots,n-1\}$ for the vertex labels of $\cD_n$. 

We first suppose that $n$ is odd. \po\ plays the center vertex $u=(n-1)/2\in \cD_n$. Then $\cD_n(u)\cong  \cA\oplus \cB$ where $\cA\cong \cB$, and thus $\cD_n(u)\in \sP$ by Lemma \ref{l:Iso}. Thus $\cD_n\in \sN$.

At this point we recall the more specific notation $\cP_k^i$ for any path on $k$ vertices with exactly $i$ shaded endpoints.
Thus, $\cD_k$ can also be written as $\cP_k^0$.

Next we suppose that $n$ is even and consider the following cases:
\begin{quote}
\begin{itemize}
    \item[$n=2$:] Here \po\ wins with any move so $\cD_2\in \sN$.
    \item[$n=4$:] Let \po\ play $u\in \cD_4$. By symmetry, we can assume that $u=0$ or $u=1$. In both cases, \pt\ wins by playing vertex 3. Thus $\cD_4\in \sP$.
    \item[$n=6$:] In this case \po\ plays an endpoint $u$, so that $\cD_6(u)\cong\cP_5^1$.
    From this, it is easy to see that, for any move $v$ by \pt\, the at most three white vertices of $\cD_6(u,v)$ are consecutive, and subsequently can be dominated on the next move by \po.
    Hence $\cD_6\in\sN$ (and $\cP_5^1\in\sP$).
    \item[$n=8$:] 
    Now \po\ plays a neighbor $u$ of an endpoint.
    If \pt\ plays one neighbor $v$ of $u$ then \po\ plays the other $w$, so that $\cD_8(u,v,w)\cong \cP_5^1$, which is in \sP\ by the argument in the case $n=6$ above.
    So we assume otherwise.
    If \pt\ plays $v$ so that the at most 3 white vertices of $\cD_8(u,v)$ are consecutive, then \po\ plays to dominate them on the next move.
    The only remaining case is that \pt\ plays the vertex $v$ at distance two from the unshaded endpoint of $\cD_8(u)$.
    Now $\cD_8(u,v)\cong \cP_1^1\oplus\cP_3^2\oplus\cP_2^1$, and \po\ plays the unique vertex $w$ in $\cP_1^1$.
    At this point, note that any move in either component $\cP_3^2$ or $\cP_2^1$ dominates that component, so that, when \pt\ plays in one of them, \po\ plays in the other to win.
    Therefore $\cD_8\in\sN$.
    \item[$n\in\{10,12\}$:]
    For these values of $n$, the case analysis is more extensive, but not insightful.
    We verified that $\cD_n\in\sN$ for each such $n$ by computer.
    \item[$n\ge 14$:] 
    The following strategy can be used by \pt\ to win this game.
    Irrespective of the first two moves of \po, the first two moves of \pt\ should be the endpoints of $\cP_n^0$.
    If \pt\ cannot choose an endpoint because \po\ has already played it, then \pt\ can play any other vertex.
    Thus, after each player has made two moves, we arrive at the even position $\cQ=\cP_{a_1}\oplus\cP_{a_2}\oplus\cP_{a_3}$, where $a_1+a_2+a_3=n-4$ and each $a_i\ge 0$.
    Because each path has at most 2 shaded endpoints, $|W_\cQ|\ge (n-4)-6\ge 4$.
    Hence $\cQ\in \sP$ by Theorem \ref{t:characterization}, and so $\cD_n\in \sP$. 
\end{itemize}
\end{quote}

\section{Other specific graphs}
\label{s:Specifics}

The path $P_n$ can be described as the internally 2-regular tree on $n$ vertices.
A natural extension of this class of trees is the set of internally $r$-regular trees.
(Internally 3-regular trees can be thought of as rooted binary trees with an additional pendant vertex attached to the root.)
One instance in this class is the set of internally $r$-regular caterpillars.
A {\it caterpillar} is a tree such that every vertex not on a longest path $P_k$ (called the {\it spine}) is adjacent to some vertex of $P_k$.
Such a caterpillar is called {\it even} ({\it odd}) if $k$ is {\it even} ({\it odd}).
The bridging construction yields the following theorem.

\begin{thm}
\label{t:Int3RegCatEven}
If $G$ is an even internally $r$-regular caterpillar with $r\ge 3$, then $\zD(G)\in\sP$.
\end{thm}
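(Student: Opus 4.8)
The plan is to exhibit a $3$-involution of $G$, or more precisely of a suitable family of positions, so that the Involution Lemma (Lemma~\ref{l:Involution}) applies. Write the spine of $G$ as $P_k$ with $k$ even, say with spine vertices $v_1,\ldots,v_k$ in order. Because $G$ is internally $r$-regular with $r\ge 3$, every interior spine vertex $v_i$ ($2\le i\le k-1$) carries at least $r-2\ge 1$ pendant leaves, and the two spine endpoints $v_1,v_k$ carry at least $r-1$ pendant leaves each; these leaf-sets are the ``legs'' of the caterpillar. The natural candidate involution $\f$ is the composition of the spine-reversing map $v_i\mapsto v_{k+1-i}$ with some fixed bijection between the leg of $v_i$ and the leg of $v_{k+1-i}$ (which exists since the two legs have the same cardinality by $r$-regularity — wait, they need not: an internally $r$-regular caterpillar only forces the \emph{degree} of interior vertices to be $r$, so $v_i$ has exactly $r-2$ legs for interior $i$ and $v_1,v_k$ have exactly $r-1$; hence $\deg(v_i)=\deg(v_{k+1-i})$ and the legs can indeed be matched bijectively). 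Thus $\f$ is a well-defined automorphism of $G$ of order $2$, and since $k$ is even it has no fixed point on the spine, and no leaf is fixed either.

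The key step is checking that $\f$ is $3$-involutionary, i.e.\ $N[v]\cap N[\f(v)]=\emptyset$ for all $v$. For a leaf $\ell$ hanging off $v_i$, we have $N[\ell]=\{\ell,v_i\}$ and $N[\f(\ell)]=\{\f(\ell),v_{k+1-i}\}$; these are disjoint unless $v_i=v_{k+1-i}$ (impossible since $k$ even) or $v_i\in N[\f(\ell)]$, which would force $v_i=v_{k+1-i}$ or $v_i$ adjacent to $v_{k+1-i}$ on the spine, i.e.\ $|2i-k-1|\le 1$. Since $k$ is even, $2i-k-1$ is odd, so $|2i-k-1|=1$, meaning $v_i$ and $v_{k+1-i}$ are spine-adjacent — this happens exactly when $i=k/2$ or $i=k/2+1$, i.e.\ at the two central spine vertices. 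So the middle of the spine is the obstacle. For spine vertices, $N[v_i]$ consists of $v_{i-1},v_i,v_{i+1}$ together with the leaves of $v_i$, and one checks disjointness from $N[\f(v_i)]=N[v_{k+1-i}]$ fails only when $|i-(k+1-i)|\le 2$, again pinpointing the central region.

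The main obstacle, therefore, is precisely that the naive reversal involution is \emph{not} $3$-involutionary near the center of an even spine (the two central spine vertices are adjacent and their neighborhoods overlap). To handle this I would split the game at the central spine edge: since $k$ is even, $G$ minus the central edge $v_{k/2}v_{k/2+1}$ is a disjoint union $G'\cup G''$ of two isomorphic ``half-caterpillars,'' and more usefully, I would argue that from the initial position, once \nx\ makes any move, \pv\ can respond with its $\f$-image, and the only way the overlap-at-the-center issue bites is if \nx\ plays one of the two central spine vertices — in which case \pv\ plays the other, after which the residual position decomposes as a sum of two isomorphic positions on the two halves, to which Lemma~\ref{l:Iso} applies. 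More cleanly: I would phrase the pairing as a hybrid — use the involution $\f$ as long as no central spine vertex has been chosen, and once \nx\ touches the center, \pv\ immediately plays the partner central vertex, reducing to $\cA\oplus\cA$ via Lemma~\ref{l:Iso}; one then verifies (as in the proof of the Involution Lemma, part (1)) that before that happens, no move of \nx\ can be a winning move, because the leg-leaves at the far ends guarantee a white vertex always survives. Assembling these cases — ``center not yet played, use $\f$'' versus ``center played, reduce to a symmetric sum'' — and checking that the residual positions after a central play are genuinely isomorphic (they are, since the two halves of the caterpillar are isomorphic and the colorings are mirror images) completes the proof.
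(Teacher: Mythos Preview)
Your strategy is exactly the paper's: the paper notes that deleting the central spine edge writes $G=(T,x)$--$(T,x)$ for isomorphic halves and applies Theorem~\ref{t:InvoP}.\ref{cp:Bridge}, whose proof is precisely the mirroring argument you outline (the spine-reversing involution, with the pendant leaf at the join-vertex blocking a winning move there).

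One step you glossed over does need closing: you verify that \emph{non}-central moves are never winning, but not that a move on a central spine vertex $v_{k/2}$ is itself never winning --- if it were, \pv\ could not respond and your reduction to $\cA\oplus\cA$ would never occur. This is exactly where $r\ge 3$ enters: the interior spine vertex $v_{k/2}$ carries at least one pendant leaf $\ell$, and in a $\f$-symmetric position with $v_{k/2}$ not yet black, $\ell$ cannot be shaded (its sole neighbour is $v_{k/2}$). So either $\ell$ is white, in which case $\f(\ell)\notin N[v_{k/2}]$ is also white and $\cQ(v_{k/2})$ is nontrivial; or $\ell$ is black, forcing $v_{k/2}$ (and by symmetry $v_{k/2+1}$) to be shaded, so that $W_\cQ\subseteq N[v_{k/2}]\cap N[v_{k/2+1}]=\{v_{k/2},v_{k/2+1}\}$ is already empty. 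This is precisely the role of the ``$G_0-x$ has a singleton component'' hypothesis in Theorem~\ref{t:InvoP}.\ref{cp:Bridge}. (A minor side note: the endpoints of a longest path in a tree are leaves, so $v_1,v_k$ carry no pendant legs, not $r-1$; harmless, since the needed symmetry $\deg(v_i)=\deg(v_{k+1-i})$ still holds.)
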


\begin{proof}
Let $e=\{x_1,x_2\}$ be the middle edge of the spine $P$ of $G$; then $G-e$ is the disjoint union of two isomorphic copies of some tree $T_1\cong T_2$.
Then $G=(T_1,x_1)$--$(T_2,x_2)$ and each $T_i-x_i$ has a singleton component, 
so $\zD(G)\in\sP$ by Theorem \ref{t:InvoP}.\ref{cp:Bridge}.
\end{proof}

\begin{thm}
    If $G$ is the $k$-sunlet graph $S_k$, with $k\geq 3$, then $\zD(G)\in\sP$.
\end{thm}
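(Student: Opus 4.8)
The plan is to recognize $S_k$ as an instance of the dangling construction already covered by Theorem~\ref{t:InvoP}.\ref{cp:DangleEven}, and then simply check the hypotheses of that theorem. Writing $c_0,\ldots,c_{k-1}$ for the vertices of $C_k$, the $k$-sunlet $S_k$ is obtained by attaching a pendant edge $H_i\cong K_2$ at each $c_i$; in the notation of the paper this is exactly $S_k=(C_k,\bx)\cdot(\bH,\by)$, where $\bx=(c_0,\ldots,c_{k-1})$ lists all of $V(C_k)$, $\bH=(H_0,\ldots,H_{k-1})$ with each $H_i\cong K_2$, and $y_i$ is the endpoint of $H_i$ glued to $c_i$. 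This is precisely the example used to introduce the dangling operation.

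Next I would verify the four conditions of Theorem~\ref{t:InvoP}.\ref{cp:DangleEven} one at a time: (i) $G_0=C_k$ has $|V(G_0)|=k\ge 3\ge 2$; (ii) $\bx=V(G_0)$, since exactly one copy of $K_2$ is dangled at each cycle vertex; (iii) each $H_i\cong K_2\cong K_{1,1}$, so $H_i\cong K_{1,m_i}$ with $m_i=1$, which is odd and at least $1$; and (iv) the attachment vertex $y_i$ is a dominating vertex of $H_i$, since in $K_2$ we have $N[v]=V(K_2)$ for either vertex $v$. With all four conditions in hand, Theorem~\ref{t:InvoP}.\ref{cp:DangleEven} yields $\zD(S_k)\in\sP$ for every $k\ge 3$, which is the desired conclusion.

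There is no real obstacle here: essentially all of the work is already packed into Theorem~\ref{t:InvoP}.\ref{cp:DangleEven}. The only points needing a moment of care are the bookkeeping ones --- noticing that a single pendant edge is the star $K_{1,1}$ (so the ``$m_i$ odd'' requirement is met with $m_i=1$), and that both endpoints of an edge count as dominating vertices of $K_2$. I would also remark that for even $k\ge 6$ a shorter independent argument exists, namely that the antipodal rotation of $C_k$, extended so that the pendant vertex at $c_i$ maps to the one at $c_{i+k/2}$, is a $3$-involution of $S_k$, whence $\zD(S_k)\in\sP$ by the Involution Lemma; but since that route does not cover odd $k$ or $k=4$, the dangling argument is the one to use for a uniform proof.
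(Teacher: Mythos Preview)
Your proof is correct and follows exactly the paper's approach: recognize $S_k$ as the dangling $(C_k,\bx)\cdot(\bH,\by)$ with each $H_i\cong K_2\cong K_{1,1}$, then invoke Theorem~\ref{t:InvoP}.\ref{cp:DangleEven}. Your explicit verification of the hypotheses and the side remark about the $3$-involution for even $k\ge 6$ are nice additions, but the core argument is identical to the paper's.
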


\begin{proof}
    As noted after the dangling definition, $S_k=(C_k,\bx)\cdot (\bH,\by)$, where $\bH=(K_2,\ldots,K_2)$.
    \label{ref2:scoping}
    Hence $\zD(S_k)\in\sP$ by Theorem \ref{t:InvoP}.\ref{cp:DangleEven}.
\end{proof}

Another interesting class of graphs to consider are the grids $P_k\mathbin{\gbox} P_m$.
Because of the complex pattern of $\zD(P_n)$ winner given in Theorem \ref{t:DomGamePath}, one might guess that no simple pattern describes the winner of grids in general.
However, evidence suggests otherwise.

\begin{thm}
\label{t:OddGrid}
If $km$ is odd then $\zD(P_k\mathbin{\gbox} P_m)\in\sN$.
\end{thm}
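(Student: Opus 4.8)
The plan is to identify Player~1's winning opening move as the centre vertex of the grid and then appeal to the Involution Lemma for the resulting position. Since $km$ is odd, both $k$ and $m$ are odd; I would first dispose of the degenerate cases $k=1$ or $m=1$, where $P_k\mathbin{\gbox} P_m$ is a path on an odd number of vertices (or the single vertex $P_1$) and so lies in $\sN$ by Theorem~\ref{t:DomGamePath} (or trivially). Thus assume $k,m\ge 3$.

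Label $V(G)=\{(i,j):0\le i\le k-1,\ 0\le j\le m-1\}$ and set $c=\big((k-1)/2,(m-1)/2\big)$. I would have Player~1 open by choosing $c$, producing the position $\cQ=\cQ_0(c)$ in which $c$ is black, the four neighbours of $c$ are shaded, and every other vertex is white. Because $\zD(G)\in\sN$ is equivalent to the existence of a first move $u$ with $\cQ_0(u)\in\sP$, it suffices to prove $\cQ\in\sP$, and for this I would apply the Involution Lemma with the antipodal map $\f(i,j)=(k-1-i,\ m-1-j)$. This $\f$ is an automorphism of $G$ whose unique fixed point is $c$, so it restricts to a fixed-point-free involution of $G-c$, and it respects the colouring of $\cQ$ since it fixes $c$ (hence fixes $N(c)$ setwise).

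The crux is to verify that $\f$ is a \emph{3}-involution of $\cQ$, i.e.\ that $\dist_{G-c}(v,\f(v))\ge 3$ for all $v\ne c$. For $v=(i,j)$ one computes $\dist_G(v,\f(v))=|k-1-2i|+|m-1-2j|$, a positive even integer. If it is at least $4$, then $\dist_{G-c}(v,\f(v))\ge 4$, since deleting a vertex cannot decrease distances. If it equals $2$, then $v$ and $\f(v)$ are the two neighbours of $c$ lying in a common row or a common column; their only common neighbour in $G$ is $c$, so after $c$ is deleted they are non-adjacent with no common neighbour, forcing $\dist_{G-c}(v,\f(v))\ge 3$ (indeed it equals $4$, routing around $c$ through an adjacent row or column, which exists precisely because $m\ge 3$, resp.\ $k\ge 3$). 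Having checked this, Lemma~\ref{l:Involution} gives $\cQ\in\sP$, and therefore $\zD(G)\in\sN$.

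I expect the distance argument of the previous paragraph to be the only real obstacle: one must confirm that the pairs of vertices flanking the centre, which sit at distance $2$ in $G$, are genuinely pushed to distance $\ge 3$ once the centre is removed — this is exactly where $k,m\ge 3$ enters — whereas the remaining verifications (that $\f$ is an automorphism, has $c$ as its sole fixed point, and preserves colours) are immediate.
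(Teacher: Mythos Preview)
Your proof is correct and follows essentially the same approach as the paper: Player~1 opens at the centre, and the antipodal map $\f(i,j)=(k-1-i,m-1-j)$ is a 3-involution of the resulting position, so Lemma~\ref{l:Involution} applies. The paper simply asserts the 3-involution property without your careful distance verification, and it does not separate out the cases $k=1$ or $m=1$ (which, incidentally, are already covered by the involution argument, since deleting the centre of an odd path disconnects the two flanking vertices, giving them infinite distance in $G-c$).
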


\begin{proof}
Define $s=(k-1)/2$ and $t=(m-1)/2$, and use the coordinate system $\{-s,\ldots,0,\ldots,s\}\times\{-t,\ldots,0,\ldots,t\}$ for the vertices of $G=P_k\mathbin{\gbox} P_m$.
Define the involution $\f$ on $G$ by $\f(a,b)=(-a,-b)$.
Then $\f$ is 3-involutionary
on $G-(0,0)$.
Hence, \po\ wins $\zD(G)$ by playing $u=(0,0)$ because $G(u)=G-(0,0)\in\sP$ by Lemma \ref{l:Involution}.
\end{proof}

Notice that the same argument yields that $\zD(P_{k_1}\mathbin{\gbox} P_{k_2}\mathbin{\gbox}\cdots\mathbin{\gbox} P_{k_d})\in\sN$ whenever $k_1k_2\cdots k_d$ is odd.
Similarly, toroidal grids are equally interesting.
As pointed out in Example \ref{e:CycleProduct}, which follows from Theorem \ref{t:InvoP}.\ref{cp:CartProd}, even higher-dimensional products of cycles are in \sP, provided that some cycle has length at least 6.
We record this below.

\begin{thm}
\label{t:CycleProds}
Suppose that $n_1\ge 6$ is even, $3\le n_2\le\cdots\le n_d$ are integers, and $G=C_{n_1}\mathbin{\gbox} C_{n_2}\mathbin{\gbox}\cdots\mathbin{\gbox} C_{n_d}$.
Then $\zD(G)\in\sP$.
\qed
\end{thm}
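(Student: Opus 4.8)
The plan is to realize $G$ as a Cayley graph of an abelian group and apply Corollary \ref{c:InvoGroup}, exactly as in Example \ref{e:CycleProduct} but now stated for arbitrary dimension and arbitrary factors $n_2,\ldots,n_d\ge 3$. Concretely, I would set $\G=\zZ_{n_1}\times\cdots\times\zZ_{n_d}$ with the canonical generating set $\Sg=\{\pm e_j\mid 1\le j\le d\}$, where $e_j$ has a $1$ in coordinate $j$ and $0$ elsewhere. As recorded in Example \ref{e:CycleProduct}, $\mfC(\G,\Sg)\cong C_{n_1}\mathbin{\gbox}\cdots\mathbin{\gbox} C_{n_d}=G$, and $\zD(\G,\Sg)=\zD(G)$; so it suffices to prove $\zD(\G,\Sg)\in\sP$, which by Corollary \ref{c:InvoGroup} reduces to showing that $(\G,\Sg)$ is $3$-involutionary.

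For this I would take the element $g=(n_1/2,0,\ldots,0)$, which is well defined since $n_1$ is even, and check the three defining conditions. First, $2g=(n_1,0,\ldots,0)=0$ while $g\ne 0$, so $g$ has order $2$. Second, $g\notin\Sg$: any $\pm e_j$ with $j\ne 1$ has first coordinate $0\ne n_1/2$, while $g=\pm e_1$ would force $n_1/2\equiv\pm 1\pmod{n_1}$, i.e.\ $n_1=2$, contradicting $n_1\ge 6$. Third, $g$ is not a sum of two generators: the first coordinate of any $\pm e_j\pm e_\ell$ lies in $\{0,\pm 1,\pm 2\}\pmod{n_1}$, i.e.\ in $\{0,1,2,n_1-2,n_1-1\}$, and this set does not contain $n_1/2$ because $n_1\ge 6$ forces $3\le n_1/2\le n_1-3$. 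Hence $(\G,\Sg)$ is $3$-involutionary.

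Applying Corollary \ref{c:InvoGroup} then gives $\zD(\G,\Sg)\in\sP$, and therefore $\zD(G)\in\sP$, completing the proof. There is no genuine obstacle here; the statement is essentially a repackaging of Example \ref{e:CycleProduct}, and the only point requiring any care is the last arithmetic check, which is precisely where the hypothesis $n_1\ge 6$ is needed rather than merely $n_1$ even: for $n_1=4$ one has $n_1/2=2=e_1+e_1$, so this particular involution fails to be $3$-involutionary. The remaining small cases, in which every factor is $\zZ_2$ or $\zZ_4$, are handled separately via Corollary \ref{c:Cubes} and Proposition \ref{p:Abelian}, but lie outside the scope of this theorem.
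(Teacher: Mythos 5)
Your proposal is correct and is essentially the paper's own argument: the paper proves this theorem by citing Example \ref{e:CycleProduct}, which is exactly your construction --- realize $G$ as $\mfC(\zZ_{n_1}\times\cdots\times\zZ_{n_d},\Sg)$ with the canonical generators and apply Corollary \ref{c:InvoGroup} to the order-two element $(n_1/2,0,\ldots,0)$. Your explicit arithmetic check that this element is not a generator nor a sum of two generators (using $n_1\ge 6$) is the same verification the paper performs, just spelled out in slightly more detail.
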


For $m\ge 2t+1$, the {\it Kneser graph} $K(m,t)$ has vertex set $\binom{[m]}{t}$, with edges between disjoint pairs of vertices.
The {\it Petersen graph} equals $K(5,2)$ and is of great importance in many areas of graph theory.
It is natural to investigate which player wins the games on these graphs.

\begin{thm}
\label{t:Petersen}
For the Petersen graph $P$, we have $\zD(P)\in\sP$.
\end{thm}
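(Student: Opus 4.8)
The plan is to give an explicit strategy for $\pt$ --- in fact a pairing strategy that forces a win on the fourth move. The starting point is that the domination number of $P$ equals $3$, so the game lasts at least three moves; since a dominating set created on an odd move is a win for $\po$ and one created on an even move is a win for $\pt$, it suffices to have $\pt$ prevent $\po$ from finishing on the third move and then finish herself on the fourth. Note first that $P$ is not $3$-involutionary (every involution of $P$, being a transposition or a product of two disjoint transpositions in $\mathrm{Aut}(P)=S_5$ acting on $2$-subsets, fixes at least one vertex), so the Involution Lemma does not apply directly. Using that $P$ is vertex-transitive, I would assume $\po$'s first move is a fixed vertex $u_0$ in the standard coordinates --- outer $5$-cycle $u_0u_1u_2u_3u_4$, inner vertices $v_0,\dots,v_4$ with $v_i\sim v_{i+2}$ (indices mod $5$), spokes $u_i\sim v_i$ --- and using that $P$ is edge-transitive, I would let $\pt$ reply on the second move with a neighbour of $u_0$, say $v_0$. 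After these two moves the black vertices are $\{u_0,v_0\}$, the shaded vertices are $S=\{u_1,u_4,v_2,v_3\}$, and the white vertices are $W=\{u_2,u_3,v_1,v_4\}$, which induce exactly two disjoint edges $u_2u_3$ and $v_1v_4$ in $P$.

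Two small facts then complete the argument. First, $\po$ cannot end the game on the third move: if $\po$ plays any $x\in S\cup W$, then $\{u_0,v_0,x\}$ is dominating if and only if $W\subseteq N[x]$, which is impossible because a vertex of $W$ has only one neighbour inside $W$ (as $W$ induces $2K_2$) while a vertex outside $W$ has degree $3<|W|$. Second, I would produce the pairing $M=\bigl\{\{u_1,u_4\},\{v_2,v_3\},\{u_2,v_1\},\{u_3,v_4\}\bigr\}$ of the eight non-black vertices and check that for every pair $\{a,b\}\in M$ one has $N[a]\cup N[b]\supseteq W$, equivalently that $\{u_0,v_0,a,b\}$ is a dominating set of $P$; this is a routine verification of four closed-neighbourhood unions. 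Then $\pt$'s strategy is simply: on the fourth move, play the $M$-partner of $\po$'s third move. Whatever vertex $x$ was played on the third move, the chosen set becomes $\{u_0,v_0,x,M(x)\}$, which is dominating, and $\pt$ has made the last move; hence $\zD(P)\in\sP$.

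The genuine content is finding a good pairing of the eight vertices that remain after the second move, and here some care is needed: a ``global'' pairing strategy on all ten vertices fails --- for instance, with the matching of spokes, two completed pairs leave exactly one vertex undominated, and $\po$ then dominates it on the fifth move --- so $\pt$ must first absorb $\po$'s opening move with an adjacent reply and only afterwards switch to a pairing strategy on the remaining vertices. Once the coordinates above are fixed, the two incidence checks (that $W$ lies in no closed neighbourhood, and that each pair of $M$ together with $\{u_0,v_0\}$ dominates $P$) are short finite calculations, which I would carry out explicitly in the write-up.
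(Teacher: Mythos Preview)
Your proof is correct and follows essentially the same strategy as the paper's: after $\po$'s first move, $\pt$ plays an adjacent vertex, and then $\pt$ uses a pairing on the eight remaining vertices so that any completed pair (together with the first two moves) dominates $P$. Your explicit matching $M$ is precisely the paper's pairing in disguise---the paper observes that $P-N[u_0]$ is a $6$-cycle $C$ and pairs antipodal vertices of $C$ (which gives your pairs $\{u_2,v_1\}$, $\{u_3,v_4\}$, $\{v_2,v_3\}$), together with the remaining two neighbours of $u_0$ (your pair $\{u_1,u_4\}$); the only difference is that you verify the four domination checks directly, while the paper phrases the same verification via the $6$-cycle structure.
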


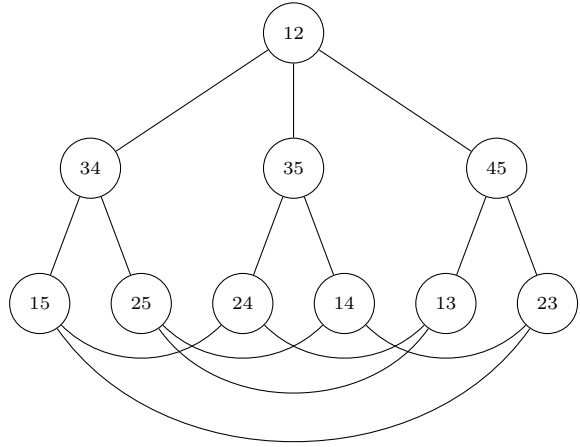
\begin{figure}
\centering
\begin{tikzpicture}[scale=0.45]
\tikzstyle{every node}=[draw,shape=circle,minimum size=.8cm,font=\scriptsize];
\def \wid {3cm}
\def \hit {4cm}
\def \Ang {360/5}
\def \ang {\Ang/4}
\def \radv {2cm}
\def \radz {4.2cm}
\path ({\wid * 2.5},{\hit * 2}) node (12) {$12$};
\path ({\wid * 0.5},{\hit * 1}) node (34) {$34$};
\path ({\wid * 2.5},{\hit * 1}) node (35) {$35$};
\path ({\wid * 4.5},{\hit * 1}) node (45) {$45$};
\path ({\wid * 0},{\hit * 0}) node (15) {$15$};
\path ({\wid * 1},{\hit * 0}) node (25) {$25$};
\path ({\wid * 2},{\hit * 0}) node (24) {$24$};
\path ({\wid * 3},{\hit * 0}) node (14) {$14$};
\path ({\wid * 4},{\hit * 0}) node (13) {$13$};
\path ({\wid * 5},{\hit * 0}) node (23) {$23$};
\draw (12) -- (34);
\draw (12) -- (35);
\draw (12) -- (45);
\draw (34) -- (15);
\draw (34) -- (25);
\draw (35) -- (14);
\draw (35) -- (24);
\draw (45) -- (13);
\draw (45) -- (23);
\draw (15) to[out=-45,in=-135] (24);
\draw (25) to[out=-45,in=-135] (14);
\draw (24) to[out=-45,in=-135] (13);
\draw (14) to[out=-45,in=-135] (23);
\draw (25) to[out=-55,in=-125] (13);
\draw (15) to[out=-55,in=-125] (23);
\end{tikzpicture}
\caption{The Petersen graph $P=K(5,2)$ with its vertex labels shown.}
\label{fig:Pete}
\end{figure}

\begin{proof}
We simplify the notation of the vertices of $P$ by writing $ij$ instead of $\{i,j\}$ (see Figure \ref{fig:Pete}).
By symmetry, we may assume that \po\ chooses 12.
Because $P-N[12]$ is a 6-cycle $C$, which is 3-involutionary, \pt\ will lose by choosing any vertex $u$ of $C$ (\po\ responds by playing the vertex of $C$ opposite from $u$).
By symmetry again, we may assume that \pt\ chooses 35.
Similarly, \po\ loses by choosing a vertex of $C$, and so chooses, by symmetry, 34.
But now \pt\ wins by choosing 45.
\end{proof}

\section{Final Comments}  
\label{s:Comments}

Given the results of Section \ref{s:Specifics}, we offer the following open problems and conjectures.

\begin{prb}
\label{p:OddCat}
Find the winning player for $\zD(G)$ when $G$ is an odd internally regular caterpillar.
\end{prb}

The first several cases of the following conjecture are easy to verify by hand.

\begin{cnj}
\label{c:PInvo}
For $k, m > 1$ with $mk$ even, we have $\zD(P_k\mathbin{\gbox} P_m)\in\sP$.
\end{cnj}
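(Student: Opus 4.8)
Since the grid $P_k\mathbin{\gbox} P_m$ is $2$-connected for $k,m\ge 2$, none of the decomposition tools used for paths and caterpillars (bridging, dangling, cut vertices) applies, and the Involution Lemma does not apply directly either. When, say, $k$ is even, the row-reflection $\f(a,b)=(\bar a,b)$ is fixed-point free but only $1$-involutionary, since the two central columns consist of pairs $\{v,\f(v)\}$ at distance $1$; likewise, when both $k,m$ are even the point-reflection $\f(a,b)=(\bar a,\bar b)$ is $2$- but not $3$-involutionary. So any proof must use a position-dependent strategy for \pt\ rather than a fixed pairing drawn from Lemma \ref{l:Involution}.

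The plan is to have \pt\ mirror through a fixed-point-free involution $\f$ (the row-reflection when $k$ is even, the point-reflection when $k$ is odd and $m$ is even) for as long as possible. Starting from the symmetric initial position this is always legal — the image $\f(u)$ of \po's move is never already chosen, by symmetry — and it keeps $|S|+|W|$ even after each of \pt's replies. A short computation shows that from an $\f$-symmetric nontrivial position $\cQ$, \po\ has a winning move $u$ only if $W_\cQ\subseteq N[u]\cap N[\f(u)]$; since the grid is triangle-free, $N[u]\cap N[\f(u)]$ is empty unless $u$ lies in a central column (resp. central cell), in which case it is a single $\f$-orbit of two vertices at distance $\le 2$ from each other, which \po\ could then dominate in one move. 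Thus mirroring wins outright except on a finitely-shaped family of positions near the center where $W_\cQ$ has shrunk to such an orbit; the crux is to show \pt\ can always deviate in time. Concretely I would: (i) dispose of the base cases $P_2\mathbin{\gbox} P_m$, $P_3\mathbin{\gbox} P_m$, and $k,m$ below some bound, by hand or machine; (ii) prove a grid analogue of the remark following Theorem \ref{t:characterization}, namely that while $|W_\cQ|$ is not small \pt\ may reply arbitrarily and remain in \sP, reducing to positions with few white vertices; and (iii) for those positions classify the $\f$-symmetric configurations reachable under mirroring and exhibit, at each point where \po\ would otherwise force $W_\cQ$ down to the central orbit, a destructive reply for \pt\ (for instance playing into the central pair itself) that keeps \po\ out of \sN.

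The step I expect to be the main obstacle is (ii)--(iii): controlling the \emph{shaded} vertices. A symmetric position can carry a large $\f$-invariant set $S_\cQ$, which hands \po\ harmless-looking ``waiting'' moves that push the parity back onto \pt; handling exactly this phenomenon in the path case required the finer notions of \emph{totally even} position, Fact \ref{f:nontrivial}, Lemma \ref{l:totally_even}, and the full classification of Theorem \ref{t:characterization}. I expect a genuine proof here will likewise need a grid analogue of ``totally even'' together with a structural description of the \sP\ positions of $\zD(P_k\mathbin{\gbox} P_m)$, case-split on the parities (and small residues) of $k$ and $m$, and \emph{uniform in} $m$. Producing such a classification is precisely what is missing, which is why the statement is recorded only as a conjecture; our computer checks of the first several cases are consistent with it.
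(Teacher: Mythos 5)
This statement appears in the paper only as Conjecture \ref{c:PInvo}; the paper offers no proof beyond the remark that the first several cases are easy to verify by hand, so there is no proof of the paper's to compare yours against. Your proposal, as you acknowledge in its last sentence, is not a proof either: it is a strategy outline whose decisive steps are left open. The preliminary analysis is sound --- grids have no cut vertices, the natural fixed-point-free reflections are only 1- or 2-involutionary so Lemma \ref{l:Involution} does not apply directly, and from an $\f$-symmetric position a finishing move $u$ for \po\ forces $W_\cQ\subseteq N[u]\cap N[\f(u)]$, which by triangle-freeness is nonempty only when $u$ lies near the center. So ``mirroring can only break down at the center'' is the right diagnosis.

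The genuine gap is precisely your steps (ii)--(iii), and it is larger than the proposal makes it sound. Once \pt\ is forced to deviate from the mirror, symmetry is lost for the remainder of the game, so you need a description of which (no longer symmetric) positions with few white vertices lie in \sP; and unlike the path case, a move on a grid does not split the position into independent components, so the whole game-sum calculus behind Lemma \ref{l:pairs}, Lemma \ref{l:totally_even}, Fact \ref{f:nontrivial}, and Theorem \ref{t:characterization} has no grid analogue --- in particular the shaded vertices interact with the rest of the board instead of sitting in isolated $\cP_1$ components, so the notion of ``totally even'' does not transfer. Moreover, your item (ii) (that \pt\ may answer arbitrarily while $|W_\cQ|$ is large) is itself unproved and is essentially equivalent in difficulty to the classification you defer to (iii). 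In short, the proposal correctly identifies the obstruction but does not overcome it; the statement remains a conjecture, exactly as the paper records it.
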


\begin{prb}
\label{p:Tori}
For integers $3\le n_1\le\ldots\le n_d$, with every even $n_i=4$, determine the winner of $\zD(G)$ for $G=C_{n_1}\mathbin{\gbox}\cdots\mathbin{\gbox} C_{n_d}$.
\end{prb}

A general statement one might hope to prove about Cartesian products involves the case in which the winner of one of the graphs is known.

\begin{qst}
\label{q:PInvo}
Let $G, H$ be graphs such that $\zD(G)\in \sP$. 
Under what conditions is it true that $G\mathbin{\gbox} H\in \sP$?
\end{qst}

For example, Theorem \ref{t:InvoP}.\ref{c:PInvo} states that $\zD(G\mathbin{\gbox} H)\in\sP$ if $G$ is 3-involutionary, so the question is open for graphs in \sP\ that are not 3-involutionary, such as $\zD(P_4)$.
If Conjecture \ref{c:PInvo} is true, though, $\zD(P_4\mathbin{\gbox} P_m)\in\sP$ for any $m\ge 2$.

A similar question arises when considering graph powers, along the lines of Theorem \ref{t:InvoP}.\ref{cp:Cpower}.

\begin{qst}
\label{q:GPowers}
For what conditions on a graph $G$ can we determine the winner of $\zD(G^{(k)})?$
\end{qst}

We listed five graph constructions in Section \ref{ss:GraphCon} that produce \sP\ games by Theorem \ref{t:InvoP}.
There are many other graph constructions one might consider.

\begin{qst}
\label{q:GraphCons}
What other graph constructions produce games in \sP?
\end{qst}

Finally, we offer a reachable problem on problem on Kneser graphs.

\begin{prb}
\label{p:Kneser}
Find the winning player for $\zD(G)$ when $G$ is the Kneser graph $K(m,2)$ and $m\ge 6$.
\end{prb}

While solving this game on a general graph appears to be out of reach, the game can be played on a number of other classes of graphs that we believe to be natural directions for future study. 
Some of the techniques described above may shed light on the case of trees. 
In particular, spiders, other caterpillars, and other internally 3-regular trees seem to be approachable cases to work on in the future; complete bipartite graphs are also a natural choice, as are non-Cartesian products of graphs and higher order Kneser graphs.

Finally, there are several related variants of the graph domination game.
The first is the Mis\`ere version in which the first player to create a dominating set loses rather than wins.
Another version that can be played is that players take turns choosing vertices until the complement of the set of chosen vertices is not a dominating set. Additionally, all of these variants can be played with ``dominating sets'' replaced by ``total dominating sets'', which are subsets $D\subseteq V$ of vertices of a graph with the property that $N(D)=V$.


\section*{Acknowledgement}

This material is based upon work supported by the U.S. National Science Foundation under Grant No. 2015425.
The authors are grateful to the anonymous referees for excellent suggestions that improved the exposition in this paper.


\bibliographystyle{acm}
\bibliography{refs}


\section*{Statements and Declarations}

The authors have no relevant financial or non-financial interests to disclose and no conflicts of interest to report.
No data was involved in this research.

\end{document}